\renewcommand{\geq}{\geqslant}		
\renewcommand{\leq}{\leqslant}		
\newtheorem{theorem}{Theorem}[section]			
\newtheorem{corollary}[theorem]{Corollary}		
\newtheorem{lemma}[theorem]{Lemma}			
\newtheorem{prop}[theorem]{Proposition}			
\newtheorem{newdef}{Definition}[section]
\def\eref#1{$(\ref{#1})$}				
\def\lref#1{Lemma~$\ref{#1}$}		
\def\dref#1{Definition~$\ref{#1}$}		
\def\tref#1{Theorem~$\ref{#1}$}		
\def\cref#1{Corollary~$\ref{#1}$}		
\def\pref#1{proposition~$\ref{#1}$}
\title{A Classification of Permutation Polynomials through some linear maps}
\author{Megha M. Kolhekar, Harish K. Pillai\\
  \small \{meghakolhekar, hp\}@ee.iitb.ac.in\\
  \small Department of Electrical Engineering, IIT Bombay\\
  \small India
}
\begin{document}
\maketitle

\abstract{In this paper, we propose linear maps over the space of all polynomials $f(x)$ in $\mathbb{F}_q[x]$ that map $0$ to itself, through their evaluation map. Properties of these linear maps throw up interesting connections with permutation polynomials. We study certain properties of these linear maps. We propose to classify permutation polynomials by identifying the generalized eigenspaces of these maps, where the permutation polynomials reside. As it turns out, several classes of permutation polynomials studied in literature neatly fall into classes defined using these linear maps. We characterize the shapes of permutation polynomials that appear in the various generalized eigenspaces of these linear maps. For the case of $\mathbb{F}_p$, these generalized eigenspaces provide a degree-wise distribution of polynomials (and therefore permutation polynomials) over $\mathbb{F}_p$.
  We show that for $\mathbb{F}_q$, it is sufficient to consider only a few of these linear maps. The intersection of the generalized eigenspaces of these linear maps contain (permutation) polynomials of certain shapes. In this context, we study a class of permutation polynomials over $\mathbb{F}_{p^2}$. We show that the permutation polynomials in this class are closed under compositional inverses. We also do some enumeration of permutation polynomials of certain shapes.}

\section{Introduction and Related Work}
Permutation polynomials (PPs) are important in cryptography, combinatorics and coding theory, to name a few areas. They have been studied extensively by various researchers for their properties, testing methods, constructions, cycle structures and compositional inverses. A good reference for the theory of permutation polynomials is \cite{Mullen:2013:HFF} while a list of relevant problems related to PPs can be found in the seminal paper \cite{Lidl:1988:whendoes}. \cite{Hou:2015:survey} is a good survey paper on the recent results related to permutation polynomials. \\
There has been consistent effort to identify new classes of PPs which are `easy' to construct. In \cite{Akbary08onsome}, the authors investigate permutation properties of polynomials $P(x)=x^r+x^{r+s}+\ldots+x^{r+ks}$ for $0<r<q-1, 0<s<q-1, k\geq 0$. They construct several classes of this form over $\mathbb{F}_q$ and enumerate them. In \cite{Yuan2008PermutationPO}, permutation polynomials of the form $(x^p-x+\delta)^s+L(x)$ over fields of characteristic $2$ and $3$ are proposed. In \cite{Zheng2013}, two new classes of permutation polynomials over finite fields are presented. Based on the link between equivalent equations and permutation polynomials, some other new permutation polynomials are also obtained. In \cite{Zheng:2016:LCP:2995737.2995773}, authors propose PPs of the form $f(x)=(ax^q+bx+c)^r\phi((ax^q+bx+c)^{(q^2-1)/d})+ux^q+vx$ over $\mathbb{F}_{q^2}$. They employ the `Akbary-Ghioca-Wang (AGW)' criterion two times, and reduce the problem of determining whether $f(x)$ permutes $\mathbb{F}_{q^2}$  to that of verifying whether two more polynomials permute two subsets of $\mathbb{F}_{q^2}$. They further unify these results and generalize some known classes of PPs. \cite{Gupta2016NCP} describes four new classes of permutation trinomials of the form $x^rh(x^{2^m-1})$ over $\mathbb{F}_{2^{2m}}$. In \cite{Wang2018}, the authors present six new classes of permutation trinomials over $\mathbb{F}_{2^n}$ which have explicit forms by determining the solutions of some equations. Some of the ideas of constructing new PPs are, for example, through additive cyclotomy \cite{2008arXiv0810.2830Z}, using the AWG criterion \cite{AKBARY201151} or through linearized polynomials \cite{evans1992},  \cite{Zheng:2016:LCP:2995737.2995773}. Determining the compositional inverse of a given permutation polynomial is also a challenging task. Very few papers such as \cite{TUXANIDY2014244} give compositional inverses of proposed classes. \\
In this paper,  we study linear maps over the space of polynomials that map $0$ to $0$ through their evaluation over $\mathbb{F}_q$. These maps have some specific properties that we explore. We study the generalized eigenspaces of these maps and define classes of permutation polynomials based on the generalized eigenspaces to which they belong. Therefore, the shapes of polynomials in the generalized eigenspaces of this map are determined. Especially, over $\mathbb{F}_{p^2}$ we come across a known class studied in \cite{Zheng:2016:LCP:2995737.2995773} which is a consequence of the specific shapes of polynomials appearing in the eigenspaces of these maps. We prove that the compositional inverses belong to the same class and give a parametric representation of the compositional inverse, apart from counting the permutation polynomials of this type. We further describe other classes of permutation polynomials, which are again closed under compositional inverses.  We also describe the nature of this map and the eigenspaces over prime fields $\mathbb{F}_p$ and show that the generalized eigenspaces contain degree-wise distribution of PPs of $\mathbb{F}_p$. The paper is organized as follows.\\
Section $2$ describes the preliminaries regarding permutation polynomials over finite fields. In Section $3$, we introduce the linear map, its representation, the generalized eigenspaces of the matrix, their dimensions and bases. Section $4$ is about similar maps defined over $\mathbb{F}_q[x]$, their matrices and the similarity of their eigenspaces. We describe the intersection of the generalized eigenspaces of these matrices and their dimension and structure. The case of $\mathbb{F}_{p^2}$ is studied in Section $5$ to demonstrate the utility of studying these linear maps. In this section, we study permutation polynomials of a certain shape, and their compositional inverses. We also enumerate the specific numbers of permutation polynomials that appear in various sub-classes. Some proofs of this section are given in the Appendix. Section $6$ has some concluding remarks.
\section{Preliminaries}
$\mathbb{F}_q$ is the finite field with $q$ elements where $q=p^n$, $p$ being a prime number. Thus the characteristic of the field is $p$. $\mathbb{F}_q^*$ represents the nonzero elements of $\mathbb{F}_q$. A permutation polynomial $f(x)$ of a given finite field is a polynomial that permutes the elements of the field through its evaluation map. This is equivalent to saying that the map $f(x): \mathbb{F}_q \longrightarrow \mathbb{F}_q$ is a bijective map. It is sufficient to consider polynomials having degree $\le q-1$ since two maps $f$ and $g$ are equivalent as far as their evaluation maps are concerned if $f(x) \equiv g(x)~mod~(x^q-x)$ \cite{Mullen:2013:HFF}.  The composition of two polynomials $f(x)$ and $g(x)$ is defined as $f(x) \circ g(x)=f(g(x))~mod~(x^q-x)$. Permutation polynomials of a finite field form a group under composition. \\

It is of interest to know when a given polynomial permutes the given field. We give here a criterion due to Hermite. There are several others like the Raussnitz criteria, Turnwald criteria etc. 

\begin{prop}{\bf Hermite's Criterion:}\label{thm:hermite}
Let $q=p^r$, where $p$ is a prime and $r$ is a positive integer. Then a polynomial $f \in \mathbb{F}_q[x]$ is a permutation polynomial of $\mathbb{F}_q$ if and only if the following two conditions hold:
\begin{enumerate}
\item the reduction of $f(x)^{q-1} \mod (x^q-x)$ is monic of degree $q-1$;
\item for each integer $t$ with $1 \leq t \leq q-2$ and $t \not\equiv 0 \mod p$, the reduction of $f(x)^t \mod (x^q-x)$ has degree $\leq q-2$.
\end{enumerate}
\end{prop}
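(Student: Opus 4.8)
The plan is to translate both conditions into statements about the \emph{power sums} of $f$ over $\mathbb{F}_q$ and to exploit the elementary evaluation of $\sum_{a \in \mathbb{F}_q} a^t$. First I would record the basic fact that, writing $S_t = \sum_{a \in \mathbb{F}_q} a^t$, one has $S_t = 0$ for $0 \le t \le q-2$ and $S_{q-1} = -1$; this follows by summing a geometric progression over the cyclic group $\mathbb{F}_q^*$, together with $S_0 = q = 0$ in characteristic $p$. Reducing any $g \in \mathbb{F}_q[x]$ modulo $x^q - x$ and summing termwise then yields the key dictionary
\begin{equation*}
\sum_{a \in \mathbb{F}_q} g(a) = -c_{q-1},
\end{equation*}
where $c_{q-1}$ is the coefficient of $x^{q-1}$ in the reduction $g \bmod (x^q-x)$. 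Applying this with $g = f^t$ shows that the two Hermite conditions are precisely the assertions that the power sum $P_t := \sum_{a} f(a)^t$ satisfies $P_{q-1} = -1$ (condition $1$) and $P_t = 0$ for $1 \le t \le q-2$ with $t \not\equiv 0 \pmod p$ (condition $2$).

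For the forward implication, I would note that if $f$ permutes $\mathbb{F}_q$ then $\{f(a)\}$ runs over all of $\mathbb{F}_q$, so $P_t = S_t$ for every $t$. Reading off $S_{q-1} = -1$ (which forces $c_{q-1} = 1$, hence the reduction of $f^{q-1}$ is monic of degree $q-1$) and $S_t = 0$ for $1 \le t \le q-2$ gives both conditions at once through the dictionary above.

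The converse carries the real content. Assuming the two conditions, I first want to upgrade condition $2$ so that $P_t = 0$ for \emph{all} $t$ with $1 \le t \le q-2$, not merely those coprime to $p$. The device is the Frobenius relation $P_{pt} = \bigl(\sum_a f(a)^t\bigr)^p = P_t^{\,p}$, valid because the $p$-th power map is additive in characteristic $p$. Writing any such $t$ as $p^s u$ with $p \nmid u$ and $1 \le u \le q-2$, condition $2$ gives $P_u = 0$, hence $P_t = P_u^{\,p^s} = 0$. Thus $P_t = S_t$ for all $1 \le t \le q-1$.

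Finally I would count preimages. For $b \in \mathbb{F}_q$ the number of solutions of $f(x) = b$ is $N(b) = \sum_a \bigl(1 - (f(a)-b)^{q-1}\bigr)$, since $1 - c^{q-1}$ is the indicator of $c = 0$. Expanding $(f(a)-b)^{q-1}$ by the binomial theorem and summing over $a$ expresses $\sum_a (f(a)-b)^{q-1}$ as a combination of the $P_j$; since $P_j = S_j$ throughout, this sum equals $\sum_a (a-b)^{q-1} = S_{q-1} = -1$, so $N(b) \equiv 1 \pmod p$ for every $b$. As each $N(b)$ is a nonnegative integer congruent to $1$ modulo $p$, it is at least $1$; and since $\sum_{b} N(b) = q$ over the $q$ values of $b$, every $N(b)$ must equal exactly $1$, so $f$ is a permutation. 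The main obstacle is exactly this closing argument together with the Frobenius upgrade: bridging the gap between the incomplete list of power-sum conditions that Hermite checks and the full bijectivity requires both the additivity of Frobenius (to recover the omitted power sums with $t \equiv 0 \pmod p$) and the mod-$p$ counting argument (to promote a congruence on preimage counts to an exact equality).
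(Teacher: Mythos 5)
Your proof is correct, but there is nothing in the paper to compare it against: the paper states Hermite's criterion in the Preliminaries as a known classical result (with a pointer to the literature) and gives no proof of it. What you have written is, in essence, the standard textbook argument (as in Lidl--Niederreiter): the dictionary $\sum_{a \in \mathbb{F}_q} g(a) = -c_{q-1}$, which is valid because $S_t = 0$ for $0 \le t \le q-2$ and $S_{q-1} = -1$; the forward direction by observing that a permutation makes $P_t = S_t$ for all $t$; the Frobenius upgrade $P_{pt} = P_t^{\,p}$, which is legitimate and correctly recovers the exponents $t \equiv 0 \pmod p$ that Hermite omits (writing $t = p^s u$ with $p \nmid u$ keeps $1 \le u \le q-2$, so condition $2$ applies to $u$); and the closing count, where $N(b) \equiv 1 \pmod p$ forces $N(b) \ge 1$, and $\sum_b N(b) = q$ over $q$ values of $b$ then forces $N(b) = 1$ for every $b$. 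Each of these steps checks out, including the two places where care is needed: the coefficient extraction for $t = q-1$ (condition $1$ is exactly $P_{q-1} = -1$), and the fact that the indicator identity $1 - c^{q-1} = [c = 0]$ turns the $\mathbb{F}_q$-valued sum into the residue of the integer $N(b)$ modulo $p$. So the proposal is a complete and self-contained proof of a statement the paper merely quotes.
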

The Hermite's test  becomes impractical with large $q$.  See \cite{vonzurGathen:1991:tests} for a detailed treatment on this. Hermite's criterion gives some immediate and very useful corollaries. 
\begin{corollary}\label{cor:max_degree}
If $q>2$ and $f(x)$ is a permutation polynomial of $\mathbb{F}_q$ then the reduction of $f\mod x^q-x$ has degree at most $q-2$.
\end{corollary}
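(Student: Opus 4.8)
The plan is to read the claim directly off condition (2) of Hermite's criterion (\pref{thm:hermite}), specialized to the smallest admissible exponent. Since a permutation polynomial $f$ of $\mathbb{F}_q$ satisfies both conditions of that criterion, I would simply extract the piece of information I need from the second condition rather than invoke the full statement. Concretely, condition (2) quantifies over integers $t$ with $1 \leq t \leq q-2$ and $t \not\equiv 0 \bmod p$, asserting that $f(x)^t \bmod (x^q-x)$ has degree at most $q-2$ for each such $t$; the desired conclusion is exactly this statement for $t=1$, where $f(x)^1 = f(x)$.

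The one step that genuinely uses the hypothesis is checking that $t=1$ is a legitimate choice. First I would note that $q = p^r > 2$ forces $q \geq 3$, so $q-2 \geq 1$ and the index set $\{t : 1 \leq t \leq q-2\}$ is nonempty and contains $t=1$. I would then observe that the congruence constraint is harmless here: for any prime $p$ we have $1 \not\equiv 0 \bmod p$, so $t=1$ also meets the requirement $t \not\equiv 0 \bmod p$. With $t=1$ confirmed as an admissible exponent, applying condition (2) yields at once that the reduction of $f(x) \bmod (x^q-x)$ has degree at most $q-2$, which is the corollary.

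There is no real obstacle beyond the nonemptiness of the exponent range, and this is precisely why the hypothesis $q>2$ cannot be dropped: when $q=2$ the interval $[1,q-2]=[1,0]$ is empty, condition (2) becomes vacuous, and indeed $f(x)=x$ permutes $\mathbb{F}_2$ while having degree $q-1=1$. Thus the entire content of the proof is the remark that $q>2$ makes the smallest Hermite exponent $t=1$ available, after which the bound is immediate.
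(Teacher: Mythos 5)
Your proof is correct and is exactly the intended derivation: the paper presents this corollary as an immediate consequence of Hermite's criterion, and specializing condition (2) to $t=1$ (admissible precisely because $q>2$ makes $1\leq q-2$, and $1\not\equiv 0 \bmod p$) is the standard way to read it off. Your added remark about $q=2$ being a genuine counterexample correctly explains why the hypothesis cannot be dropped.
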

\begin{corollary} \label{cor:degree_d}
There is no permutation polynomial of degree d $> 1$  over $\mathbb{F}_q$ if d divides $q-1$.
\end{corollary}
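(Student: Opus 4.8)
The plan is to derive a contradiction from Hermite's Criterion (\pref{thm:hermite}), specifically from its second condition. Suppose, for contradiction, that $f(x)$ is a permutation polynomial of $\mathbb{F}_q$ with $\deg f = d$, where $d > 1$ and $d \mid q-1$. The key idea is to exhibit a single exponent $t$ in the admissible range for which the reduction of $f(x)^t \bmod (x^q-x)$ fails to have degree at most $q-2$, thereby violating condition (2). The natural choice is $t = (q-1)/d$.

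First I would verify that $t$ is a legitimate exponent for condition (2). Since $d \mid q-1$, the quantity $t$ is a positive integer; since $d > 1$ we have $t < q-1$, and $d \le q-1$ gives $t \ge 1$, so $1 \le t \le q-2$. It remains to check $t \not\equiv 0 \pmod p$. Here I would use that $q = p^n$ gives $q - 1 \equiv -1 \pmod p$, so $p \nmid (q-1)$; as $t$ is a divisor of $q-1$, it follows that $p \nmid t$, i.e. $t \not\equiv 0 \pmod p$.

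Next I would compute the relevant degree. Writing $f(x) = a x^d + (\text{lower order terms})$ with $a \in \mathbb{F}_q^*$, the polynomial $f(x)^t$ has leading term $a^t x^{dt}$ with $a^t \ne 0$ (a field has no zero divisors), so $\deg f(x)^t = dt = q-1$. Because $q-1 < q = \deg(x^q-x)$, no reduction takes place, and the reduction of $f(x)^t \bmod (x^q-x)$ is $f(x)^t$ itself, of degree exactly $q-1$. This contradicts condition (2) of \pref{thm:hermite}, which forces this degree to be at most $q-2$, so no such permutation polynomial can exist.

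The argument is short, and the only genuine subtlety — the step I expect to require the most care — is confirming simultaneously that the chosen $t$ lies in the range $1 \le t \le q-2$ and is coprime to $p$, since it is precisely these two side conditions that make $t$ an eligible exponent in Hermite's test; the degree computation is immediate once one observes that no reduction modulo $x^q-x$ occurs. The boundary case $d = q-1$ gives $t = 1$ and recovers the bound already recorded in \cref{cor:max_degree}.
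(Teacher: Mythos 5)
Your proof is correct, and it is exactly the intended argument: the paper states this corollary without proof as an ``immediate'' consequence of Hermite's criterion (\pref{thm:hermite}), and the standard derivation is precisely your choice $t=(q-1)/d$, which is coprime to $p$ since $t \mid q-1$ and $p \nmid q-1$, and makes $f(x)^t$ of degree exactly $q-1$ with no reduction modulo $x^q-x$, violating condition (2). All the side conditions ($1 \le t \le q-2$, $t \not\equiv 0 \bmod p$, nonvanishing of the leading coefficient $a^t$) are checked correctly, so nothing is missing.
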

Next, we give the binomial theorem for commutative rings from \cite{Lidl:1986:IFF} for easy reference.
\begin{prop}\label{thm:binomial}
Let $R$ be a commutative ring of characteristic $p$. Then 
\[ 
(a+b)^{p^n}=a^{p^n}+b^{p^n}
\]
for $a,b \in R$ and $n \in \mathbb{N}$.
\end{prop}
With these preliminaries, we now describe a map, its representation and properties in the next section.
\section{A Linear map}
We define ``Permutation Polynomial Representative (PPR)'' as 
\begin{newdef}\label{def:PPR}
A monic permutation polynomial $f(x)$  which maps $0$ to $0$ through its evaluation map is said to be a ``Permutation Polynomial Representative (PPR)".
\end{newdef}
Note that a PPR permutes the elements of $\mathbb{F}_q^*$. For every given PPR $f$ one can construct a family of PPs of the form $\theta_1f+\theta_2$ where $\theta_1 \in \mathbb{F}_{q}^*$ and $\theta_2 \in \mathbb{F}_{q}$. Thus it is enough to study PPRs. Therefore we restrict our attention to polynomials that map $0$ to $0$. \\
Let $V[x]$ be the vector space of all polynomials $f(x) \in \mathbb{F}_q[x]$ of degree $\leq q-2$ such that $f(0)=0$. Note that all possible PPRs belong here. Further note that $\dim(V[x]) = q-2$. We define the linear map 
\[
\phi: V[x] \rightarrow V[x] 
\]
\begin{equation} \label{eq:phidef}
\phi(f(x))=f(x+1)-f(1)
\end{equation}
$\phi$ is a linear map and let $A$ be a matrix representation of $\phi$. Note that $\phi$ or $A$ takes a polynomial in $V[x]$ to another polynomial in $V[x]$ of the same degree. We first make  some simple observations about the map $\phi$. 
\begin{lemma}\label{lem:orderof_phi}
The order of the map $\phi$ is $p$ (the characteristic of the field).
\end{lemma}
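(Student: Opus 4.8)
The plan is to prove that $\phi^p$ is the identity map on $V[x]$, while no smaller positive power of $\phi$ is, so that the order of $\phi$ is exactly $p$. The natural approach is to find a closed-form expression for the iterate $\phi^k$ applied to an arbitrary $f \in V[x]$, since the definition in \eref{eq:phidef} is a shift-by-one followed by a normalization subtracting the constant term.

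First I would compute the iterates directly. Writing $(\phi f)(x) = f(x+1) - f(1)$, I would apply $\phi$ again and check by induction that
\[
(\phi^k f)(x) = f(x+k) - f(k),
\]
where the integer $k$ is read as an element of $\mathbb{F}_q$ via its image $k\cdot 1$. The inductive step is routine: applying $\phi$ to $g(x) = f(x+k) - f(k)$ gives $g(x+1) - g(1) = f(x+k+1) - f(k) - \bigl(f(k+1) - f(k)\bigr) = f(x+(k+1)) - f(k+1)$, where the subtracted constants telescope cleanly. Note that the constant term of each iterate vanishes, so $\phi^k f$ genuinely lands back in $V[x]$, consistent with $\phi$ preserving both the degree and the property $f(0)=0$.

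Next I would set $k = p$. Since the field has characteristic $p$, the element $p\cdot 1 = 0$, so $(\phi^p f)(x) = f(x+0) - f(0) = f(x) - 0 = f(x)$ for every $f \in V[x]$, using $f(0)=0$. Hence $\phi^p = \mathrm{id}$, which shows the order of $\phi$ divides $p$. Because $p$ is prime, the order is then either $1$ or $p$, so it remains only to rule out $\phi = \mathrm{id}$.

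To see that $\phi$ is not the identity, I would exhibit a single polynomial it moves: take $f(x) = x \in V[x]$ (valid since $q \geq 3$ forces $q - 2 \geq 1$, so $\deg = 1 \leq q-2$ and $f(0)=0$). Then $(\phi f)(x) = (x+1) - 1 = x = f(x)$, which unhelpfully is fixed, so instead I would use $f(x) = x^2$ (legitimate once $q-2 \geq 2$) giving $(\phi f)(x) = (x+1)^2 - 1 = x^2 + 2x$, which differs from $x^2$ precisely when $2 \neq 0$, i.e. when $p \neq 2$. The main obstacle is therefore the small-characteristic bookkeeping: to handle $p=2$ and the edge cases uniformly, the cleanest argument is to note that $(\phi f)(x) - f(x) = f(x+1) - f(x) - f(1)$, and for $f(x) = x^d$ with $1 < d \leq q-2$ this equals $d\,x^{d-1} + (\text{lower order}) - 1$, whose $x^{d-1}$ coefficient $d\cdot 1$ is nonzero for a suitable choice of $d$ not divisible by $p$; since such a $d$ exists in the range $1 < d \le q-2$ whenever $q > 2$, the map $\phi$ is nontrivial and the order is exactly $p$.
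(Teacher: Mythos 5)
Your core computation is identical to the paper's: both proofs establish by induction that $\phi^k(f) = f(x+k) - f(k)$, then set $k=p$ and use characteristic $p$ together with $f(0)=0$ to get $\phi^p = \mathrm{id}$. Where you genuinely differ is in what comes next: the paper stops there and simply declares the order to be $p$, which only shows the order \emph{divides} $p$; you correctly notice this gap and try to rule out $\phi = \mathrm{id}$. That extra step is the right instinct and makes your argument logically more complete than the paper's own proof.

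However, your closing existence claim is false as stated. You assert that a monomial $x^d$ with $1 < d \leq q-2$ and $d$ not divisible by $p$ exists whenever $q > 2$. For $q=3$ the range $1 < d \leq 1$ is empty, and for $q=4$ the only candidate is $d=2$, which is divisible by $p=2$. These are not repairable bookkeeping failures: over $\mathbb{F}_3$ the space $V[x]$ is spanned by $x$, and over $\mathbb{F}_4$ by $x$ and $x^2 = x^p$, all of which are fixed by $\phi$, so in those cases $\phi$ really \emph{is} the identity and the lemma itself is false (the order is $1$, not $p$). Your argument is sound precisely for $q \geq 5$, where a valid choice always exists (take $d=2$ when $p$ is odd, $d=3$ when $p=2$, noting $q \geq 8$ in that case); what your proof actually uncovers is that the lemma implicitly assumes $q \geq 5$, and you should state the existence claim with that hypothesis rather than $q > 2$.
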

\begin{proof}
Let $g(x)=\phi(f(x))=f(x+1)-f(1)$. Therefore $\phi(\phi(f(x))=\phi(g(x))=g(x+1)-g(1)=f(x+2)-f(2)$. Thus 
\[
\phi^2(f)=f(x+2)-f(2)
\]

By induction, $\phi^{k}(f)=f(x+k)-f(k)$. Note that $p~\mod~p=0$ and $f(x)\in V[x]$, so $f(0)=0$; hence we have 
\[
\phi^p(f(x))=f(x)
\]
and therefore order of the map $\phi$ is $p$.
 \end{proof}
 
Thus we can conclude that $A^p = I$. Therefore the characteristic polynomial of $A^p$ is $(\lambda - 1)^{q-2}$. 

\begin{lemma}\label{lem:only_1}
 The only possible eigenvalue of $A$ is $1$.
\end{lemma}
\begin{proof}
 As we have seen, $A^p = I$ and therefore the eigenvalues of $A$ must be the $p$-th roots of unity. In a field of characteristic $p$, the only $p$-th root of unity is $1$ and therefore the only possible eigenvalue of $A$ is $1$. 
\end{proof}

We shall now explore the eigenvectors of $A$ corresponding to this eigenvalue $1$. 

\begin{lemma}\label{lem:monomial_in_ker}
The monomials $x^{p^k}$ over $\mathbb{F}_{p^n}$; for $k=0,1,\ldots, n-1$, are eigenvectors of $A$ corresponding to the eigenvalue $1$.
\end{lemma}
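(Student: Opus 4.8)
The plan is to show that $\phi$ fixes each monomial $x^{p^k}$, since a vector satisfying $\phi(f)=f$ is precisely an eigenvector of $A$ associated with the eigenvalue $1$ (the only candidate, by \lref{lem:only_1}). Before computing, I would first confirm that each $x^{p^k}$ with $0 \le k \le n-1$ actually lies in $V[x]$: it clearly maps $0$ to $0$, and its degree $p^k \leq p^{n-1}$ satisfies $p^{n-1} \leq p^n - 2 = q-2$ in every non-degenerate case (the required inequality $p^{n-1}(p-1) \geq 2$ fails only when $p=2,\,n=1$, where $V[x]$ collapses to $\{0\}$ anyway).

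The heart of the argument is a one-line computation. Applying the definition \eref{eq:phidef},
\[
\phi(x^{p^k}) = (x+1)^{p^k} - 1^{p^k}.
\]
Since $\mathbb{F}_{p^n}[x]$ is a commutative ring of characteristic $p$, \pref{thm:binomial} gives $(x+1)^{p^k} = x^{p^k} + 1^{p^k} = x^{p^k}+1$. Substituting this back yields $\phi(x^{p^k}) = x^{p^k} + 1 - 1 = x^{p^k}$, so each such monomial is fixed by $\phi$ and is therefore an eigenvector of $A$ with eigenvalue $1$.

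There is no genuine obstacle here beyond invoking the right tool: the entire content is the ``freshman's dream'' identity of \pref{thm:binomial}, which is exactly what renders the Frobenius-type monomials invariant under the shift-and-recenter operation $\phi$. The only subtleties worth flagging are bookkeeping ones — verifying membership in $V[x]$ as above, and noting why the range stops at $k=n-1$: the next monomial $x^{p^n}=x^q$ has degree $q$, which both exceeds the degree bound of $V[x]$ and reduces as $x^q \equiv x \pmod{x^q-x}$, folding back onto the $k=0$ case under the reduction convention governing $V[x]$. Hence these exponents yield exactly $n$ distinct fixed monomials.
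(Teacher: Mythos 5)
Your proof is correct and follows the paper's own argument exactly: apply $\phi$ and use the characteristic-$p$ binomial identity of \pref{thm:binomial} to get $\phi(x^{p^k})=(x+1)^{p^k}-1=x^{p^k}$. The extra bookkeeping you include (membership in $V[x]$ and why the exponents stop at $k=n-1$, which the paper covers with its remark that $x^{p^n}=x$) is sound and does not change the substance.
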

\begin{proof}
Note that $x^{p^n}=x$ over $\mathbb{F}_{p^n}$.
\[
\phi(x^{p^k})=(x+1)^{p^k}-1=x^{p^k}
\]
by \pref{thm:binomial}. Therefore the monomials $x^{p^{k}}$ for $k=0,1,\ldots, n-1$ are eigenvectors of $A$ corresponding to the eigenvalue $1$.
\end{proof}


Are these monomials the only eigenvectors of $A$ ? These are the only monomials which can possibly be eigenvectors of $A$. Clearly, $\phi(x^i)$ where $i$ is not of the form $p^k$ gives a polynomial with multiple terms and therefore cannot be an eigenvector of $A$. Let us now assume that $f(x)$ is a polynomial with multiple terms which is an eigenvector of $A$, that is $\phi(f(x)) = f(x)$. Since sum of two eigenvectors is also an eigenvector, it is therefore enough to assume that the degree of $f(x)$ is not equal to $p^k$ for $k = 0,1,\ldots, n-1$.  

\begin{lemma}
Let $f(x)=x^i+a_{i-1}x^{i-1}+\ldots+a_1x$ be a polynomial in $V[x]$ such that $\phi(f(x)) = f(x)$. Then, $\deg f = i$ is a multiple of $p$.
\end{lemma}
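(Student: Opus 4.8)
The plan is to translate the eigenvector condition $\phi(f(x))=f(x)$ into a functional equation and then read off a single coefficient. By the definition \eref{eq:phidef}, the hypothesis $\phi(f(x))=f(x)$ says exactly that $f(x+1)-f(1)=f(x)$, which I would rearrange as
\[
f(x+1)-f(x)=f(1).
\]
The key observation is that the right-hand side is a constant, independent of $x$. So the whole statement reduces to asking when the polynomial $f(x+1)-f(x)$ can be constant, and in particular to forcing all of its positive-degree coefficients to vanish.

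Next I would expand the left-hand side termwise. Writing $f(x)=\sum_{j=1}^{i}a_jx^j$ with $a_i=1$, the binomial expansion gives
\[
f(x+1)-f(x)=\sum_{j=1}^{i}a_j\big((x+1)^j-x^j\big),
\]
and the crucial bookkeeping fact is that each summand $(x+1)^j-x^j=\sum_{m=0}^{j-1}\binom{j}{m}x^m$ has degree exactly $j-1$. I would then isolate the coefficient of $x^{i-1}$: since for every $j<i$ the term $(x+1)^j-x^j$ has degree $j-1<i-1$, the only contribution to $x^{i-1}$ comes from the leading term $j=i$, and it equals $\binom{i}{i-1}=i$ (using $a_i=1$). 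Because the left-hand side must coincide with the constant $f(1)$ and $i-1\ge 1$, this coefficient is forced to be $0$ in $\mathbb{F}_q$, which is precisely the statement $i\equiv 0 \pmod p$. Here I may assume $i\ge 2$, since the case $i=1$ corresponds to the monomial $x=x^{p^0}$ already handled among the monomial eigenvectors, and the standing assumption is that $\deg f$ is not of the form $p^k$.

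The main (indeed only) obstacle is the degree bookkeeping in the second step: one must be sure that no lower term $a_jx^j$ with $j<i$ can feed into the coefficient of $x^{i-1}$. This is exactly what the identity $\deg\big((x+1)^j-x^j\big)=j-1$ guarantees, so once that degree bound is in place the conclusion $p\mid i$ is immediate from reading off the single coefficient $\binom{i}{i-1}=i$. No appeal to the order of $\phi$ or to the eigenvalue analysis is needed; the functional equation together with one binomial coefficient suffices.
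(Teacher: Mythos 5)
Your proof is correct and follows essentially the same route as the paper: both arguments reduce to comparing the coefficient of $x^{i-1}$ across the identity $\phi(f)=f$, where only the leading term contributes $\binom{i}{1}=i$, forcing $i\equiv 0 \pmod p$. One small slip worth noting: in characteristic $p$ the summand $(x+1)^j-x^j$ has degree \emph{at most} $j-1$ rather than exactly $j-1$ (it drops when $p\mid j$, e.g.\ $(x+1)^p-x^p=1$), but the upper bound is all your bookkeeping actually uses, so nothing breaks.
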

\begin{proof}
\[
\phi(f(x))=(x+1)^i+a_{i-1}(x+1)^{i-1}+\ldots+a_1(x+1)-(1+a_{i-1}+\ldots+a_1)
\]
\[
=x^i+\binom{i}{1}x^{i-1}+\binom{i}{2}x^{i-2}+\ldots+1+a_{i-1}(x^{i-1}+\binom{i-1}{1}x^{i-2}
\]
\[
+\ldots+1)+\ldots+a_1x+a_1-(1+a_{i-1}+\ldots+a_1)
\]
Since $\phi(f(x))=f(x)$ by assumption, comparing the coefficients of the monomial $x^{i-1}$, we get $\binom{i}{1} + a_{i-1} = a_{i-1}$. Thus,
\[
\binom{i}{1}=0 \Rightarrow i\equiv 0~mod~p
\]
Therefore, $i$ is a multiple of $p$.
\end{proof}


We now show that for $A$ the geometric multiplicity of the eigenvalue $1$ is $p^{n-1}$. In other words, there are $p^{n-1}$ linearly independent polynomials that satisfy the equation $\phi(f) = f$. We have already seen that there are $n$ monomials $x^{p^k}$ for $k = 0, 1, \ldots, n-1$ that satisfy the equation $\phi(f) = f$. 

\begin{lemma}\label{lem:first_kernel}
The eigenspace of $A$ corresponding to the eigenvalue $1$ is spanned by the monomials $\langle x, x^p, x^{p^2}, \ldots, x^{p^{n-1}} \rangle$ and polynomials $(x^p-x)^m$ for $m = 2,3, \ldots, (p^{n-1}-1)$ where $m \neq p^k$ for any $k$.
\end{lemma}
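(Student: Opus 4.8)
The plan is to establish the claimed spanning set by proving a two-sided inclusion between the eigenspace $\{f\in V[x]:\phi(f)=f\}$ and the span of the listed polynomials. The first, easy, inclusion is that each listed polynomial is an eigenvector. The monomials are already covered by \lref{lem:monomial_in_ker}. For $(x^p-x)^m$ the key observation is that $x^p-x$ is fixed by the shift $x\mapsto x+1$: by \pref{thm:binomial}, $(x+1)^p-(x+1)=x^p+1-x-1=x^p-x$, and since $x^p-x$ vanishes at $x=1$ we get $\phi((x^p-x)^m)=\big((x+1)^p-(x+1)\big)^m-(1^p-1)^m=(x^p-x)^m$. Each such polynomial also lies in $V[x]$, since it vanishes at $0$ and has degree $pm\leq p(p^{n-1}-1)=p^n-p\leq q-2$. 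Hence the span of the listed polynomials is contained in the eigenspace.

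Before the harder inclusion I would record that the listed polynomials are linearly independent, which is immediate from the fact that they have pairwise distinct degrees: the monomials occupy degrees $1,p,\ldots,p^{n-1}$, while $(x^p-x)^m$ is monic of degree $pm$, and the restriction $m\neq p^k$ is exactly what prevents $pm$ from hitting the powers $p,\ldots,p^{n-1}$ already taken by the monomials. A short count gives $n$ monomials and $p^{n-1}-n$ binomials, so there are $p^{n-1}$ independent eigenvectors; this already forces the geometric multiplicity to be at least $p^{n-1}$.

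The substantive inclusion is that every eigenvector lies in the span, which I would prove by descending induction on the degree $d$ of an eigenvector $f$. If $d=1$ then $f$ is a scalar multiple of $x$ and we are done. If $d\geq 2$, the preceding lemma forces $p\mid d$, so $d=pm$ is a multiple of $p$ in $[2,q-2]$, i.e. $m=d/p\in\{1,\ldots,p^{n-1}-1\}$. The list therefore contains a monic eigenvector of exactly degree $d$: the monomial $x^{p^k}$ when $d=p^k$, and otherwise $(x^p-x)^m$, where in the latter case $m$ is automatically not a power of $p$ and so is one of the admitted values. Subtracting the appropriate scalar multiple of this monic eigenvector from $f$ cancels the leading term and yields an eigenvector of strictly smaller degree, to which the induction hypothesis applies. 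Combining the two inclusions gives the eigenspace as the span of the listed polynomials, and the distinct-degree count pins its dimension at $p^{n-1}$.

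The crux, and the step where the earlier lemmas carry the weight, is closing this induction, which needs two complementary degree facts: that eigenvectors of degree at least two can only occur in degrees divisible by $p$ (the preceding lemma), and that the proposed list realizes a monic eigenvector in every one of those admissible degrees (the first two steps). Once this bookkeeping is set up the recursion is routine, so I expect essentially no analytic difficulty; the care is purely combinatorial, in matching the excluded exponents $m=p^k$ against the powers-of-$p$ degrees already supplied by the monomials.
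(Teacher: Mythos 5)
Your proof is correct, and it is in fact more complete than the paper's own argument. The paper's proof of this lemma consists only of your first two steps: it verifies via \pref{thm:binomial} that the monomials and the polynomials $(x^p-x)^m$ are fixed by $\phi$, and it notes that the whole list is linearly independent because the degrees are pairwise distinct. That establishes the lower bound $\dim\ker(A-I)\geq p^{n-1}$, but the spanning claim itself --- that \emph{every} eigenvector lies in the span, equivalently that the geometric multiplicity is at most $p^{n-1}$ --- is nowhere argued in the paper's proof; the paper simply declares afterwards that the eigenspace has dimension $p^{n-1}$. Your third step supplies exactly this missing half: by the preceding (unlabelled) lemma an eigenvector of degree at least $2$ must have degree divisible by $p$, your list realizes a monic eigenvector in every admissible degree ($x^{p^k}$ in degrees $p^k$, and $(x^p-x)^{d/p}$ in the remaining multiples $d$ of $p$ up to $q-2$, where $d/p$ is automatically not a power of $p$), and subtracting the matching monic eigenvector lets a strong induction on degree reduce any eigenvector into the span. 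The only cosmetic caveat is that the preceding lemma is stated for monic $f$, so one should scale first, and one should note the recursion terminates at degree $1$ or at the zero polynomial; both are trivial. In short, the paper's proof buys brevity at the cost of leaving the upper bound on the dimension implicit (it can be recovered by combining the degree-divisibility lemma with the fact that a subspace of polynomials has dimension at most the number of distinct degrees it contains), whereas your induction makes the equality of the eigenspace with the stated span an actual theorem rather than an assertion.
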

\begin{proof} 
That the monomials are eigenvectors of $A$ is clear from \lref{lem:monomial_in_ker}. Let $g(x)=(x^p-x)^m$. 
\[
\phi(g(x))=((x+1)^p-(x+1))^m=(x^p-x)^m=g(x)
\]
due to \pref{thm:binomial}. Therefore the polynomials $(x^p-x)^m$ are eigenvectors of $A$. The set $\langle x, x^p, x^{p^2},\ldots, x^{p^{n-1}},(x^p-x)^2, (x^p-x)^3, \ldots, (x^p-x)^{p^{n-1}-1}\rangle$  are linearly independent too, as they have distinct degrees. 

\end{proof}
Thus, the eigenspace of $A$ has dimension $p^{n-1}$. Observe that another basis for the eigenspace of $A$ is given by $\langle x, (x^p - x), (x^p - x)^2, \ldots , (x^p - x)^{p^{n-1}} \rangle$. The reason for prefering the basis in \lref{lem:first_kernel} is due to the following.   As a first cut, one can look at PPRs that are eigenvectors of $A$. These PPRs have the shape
\begin{equation}\label{eq:first_ker_shape}
f(x)=\sum_{m=2, m \neq p^k}^{p^{n-1}-1} c_m(x^p-x)^m + L(x)
\end{equation}
 where $c_m \in \mathbb{F}_{p^n}$ and $L(x)=\sum_{j=0}^{n-1}d_jx^{p^j}$ for $d_j \in \mathbb{F}_{p^n}$ is a linearized polynomial of $\mathbb{F}_q$. Assuming all $c_m = 0$, all linearized polynomials are eigenvectors of $A$. It is well known that $\mathbb{F}_{p^n}$ can be viewed as a vector space over the base field $\mathbb{F}_p$. Linearized polynomials correspond to linear maps from $\mathbb{F}_{p^n}$ to $\mathbb{F}_{p^n}$ that respect the mentioned vector space structure. Thus, all linearized polynomials that correspond to linear maps of full rank are permutation polynomials. 
 
 Permutation polynomials of the form $(x^p-x+\delta)^s+L(x)$ have been studied in literature \cite{Yuan2008PermutationPO}. Observe that the PPRs corresponding to these permutation polynomials have a shape that matches the shape specified by \eref{eq:first_ker_shape}. Thus these permutation polynomials are related to eigenvectors of $A$.  
 
 Permutation polynomials of the form $A(x) + g(B(x))$ where $A,B$ are linearized polynomials (additive polynomials is the name used in \cite{Akbary08onsome}) and $g(x) \in \mathbb{F}_q[x]$ have also been studied in literature. Note that $x^p - x$ is a linearized polynomial and therefore all polynomials of the shape \eref{eq:first_ker_shape} are of this specific form.

Having characterized the eigenspace of $A$, we now look at the generalized eigenspaces of $A$. We have seen that $\dim (\ker (A - I)) = p^{n-1}$. As $A^p = I$, it is clear that $\dim(\ker (A-I)^p) = \dim(\ker (A^p - I)) = q-2 = p^n - 2$. Therefore, it is enough to look at $\ker (A-I)^k$ for $k = 2, 3, \ldots, p-1$. We now show that $\dim(\ker (A-I)^k) = kp^{n-1}$ for $k = 1, 2, \ldots, p-1$. The case for $k=1$ is already shown.

\begin{lemma}\label{lem:x_times_fx}
If a polynomial $f(x) \in \ker(A-I)^k$ then $g(x)=xf(x)$ is in $\ker(A-I)^{k+1}$ for $k = 1, \ldots, p-1$.
\end{lemma}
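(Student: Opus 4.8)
The plan is to pass from the matrix $A$ to the underlying operator and argue by induction on $k$. Write $D$ for the linear operator $\phi-\mathrm{id}$ on $V[x]$ corresponding to $A-I$, so that $\ker(A-I)^{k}=\ker D^{k}$ and the claim becomes: $D^{k}f=0$ implies $D^{k+1}(xf)=0$. The engine of the proof will be a single commutation-type identity expressing $D(xf)$ in terms of $Df$ and $f$, after which the induction is essentially automatic because the error terms produced by multiplication by $x$ already lie in $\ker D$.

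First I would record this identity. From the definition \eref{eq:phidef}, and since $\phi$ subtracts the value of its argument at $1$ while $(xf)|_{x=1}=f(1)$, one has $\phi(xf)=(x+1)f(x+1)-f(1)$. Rearranging the definition $Df=f(x+1)-f(1)-f(x)$ gives $f(x+1)=Df+f+f(1)$, where $f(1)\in\mathbb{F}_q$ is a constant. Substituting this and simplifying, I expect to obtain
\[
D(xf)=(x+1)\,Df+f+f(1)\,x .
\]
The two extra summands $f$ and $f(1)\,x$ are exactly what distinguishes this from the naive $x\,Df$; the key observation is that $x\in\ker D$ by \lref{lem:monomial_in_ker} (the case $k=0$, where $x^{p^0}=x$), so that $Dx=0$, which is what makes these terms harmless under further applications of $D$.

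Then comes the induction. For the base case $k=1$, if $Df=0$ the identity collapses to $D(xf)=f+f(1)\,x$, and applying $D$ once more annihilates both summands (as $Df=0$ and $Dx=0$), giving $D^{2}(xf)=0$. For the inductive step, assuming the statement for $k-1$, I apply $D^{k}$ to the identity: the term $D^{k}f$ vanishes by hypothesis and $D^{k}x=0$ since $x\in\ker D$, and writing $(x+1)Df=x\,Df+Df$ leaves
\[
D^{k+1}(xf)=D^{k}\!\left(x\,Df\right)+D^{k+1}f=D^{k}\!\left(x\,Df\right),
\]
because $D^{k+1}f=D(D^{k}f)=0$. Since $Df\in\ker D^{k-1}$ (as $D^{k-1}(Df)=D^{k}f=0$), the inductive hypothesis applied to $Df$ yields $D^{k}(x\,Df)=0$, which completes the step.

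Finally, the main obstacle. The algebra leading to the identity is routine, and once it is in hand the induction closes cleanly; the point requiring genuine care is the bookkeeping of degrees. Multiplication by $x$ raises degree by one, so for $f$ of degree $q-2$ the product $xf$ has degree $q-1$ and leaves $V[x]$; there one must reduce modulo $x^{q}-x$ and check that both the identity and the induction survive the reduction (equivalently, verify that the degrees actually occurring in $\ker D^{k}$ for $k\le p-1$ keep $xf$ inside $V[x]$). Confirming that this boundary case causes no trouble is the only delicate part of the argument.
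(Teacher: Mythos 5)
Your proof is correct, and it takes a genuinely different route from the paper's. The paper works through an explicit basis: it exhibits the ``new'' basis vectors $x^{p^i+k-1}$ and $x^{k-1}(x^p-x)^i$ of $\ker(A-I)^k$, checks by direct expansion of $(x+1)^{p^i+k}-1$ and $(x+1)^k(x^p-x)^i$ that $(A-I)(xf)$ lands in $\ker(A-I)^k$ for each such $f$, and extends to all of $\ker(A-I)^k$ by linearity; this computation simultaneously delivers the dimension count $\dim\ker(A-I)^k=kp^{n-1}$ and the spanning set recorded in \tref{thm:gen_basis}. Your argument instead rests on the commutation identity $D(xf)=(x+1)Df+f+f(1)x$ (which is correct; the derivation via $f(x+1)=Df+f+f(1)$ checks out), after which the induction closes for an \emph{arbitrary} $f\in\ker D^k$ with no basis needed: applying $D^k$ kills $Df+f+f(1)x$ up to the single term $D^k(x\,Df)$, and since $Df\in\ker D^{k-1}$ the inductive hypothesis finishes. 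This is cleaner and coordinate-free, and it proves the containment without knowing anything about the kernels' structure; what it does not provide, and what the paper's computation is really after, is the explicit basis of $\ker(A-I)^k$ that the paper uses immediately afterwards, which in your framework would still need a separate (easy) dimension/degree argument.

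On the boundary issue you flag at the end: your instinct is right that this is the one genuine wrinkle, but you should know it is a defect of the lemma as stated rather than of your argument, and the paper's own proof suffers from it equally. For $k=p-1$ take $f=x^{p-2}(x^p-x)^{p^{n-1}-1}\in\ker(A-I)^{p-1}$, which has degree $q-2$; then $xf$ has degree $q-1$, is already reduced modulo $x^q-x$, and hence lies outside $V[x]$ altogether (the paper's inductive step multiplies exactly this basis vector by $x$ and hits the same element). The clean repair is to enlarge $V[x]$ to all polynomials of degree $\leq q-1$ vanishing at $0$, i.e.\ to work in $\mathbb{F}_q[x]/(x^q-x)$: the map $\phi$ descends there, since $\phi\bigl((x^q-x)h\bigr)=(x^q-x)h(x+1)\equiv 0$, multiplication by $x$ is then well defined, and your identity survives the reduction, so your induction goes through verbatim. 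For $k\leq p-2$ no repair is needed at all, because by \tref{thm:gen_basis} every $f\in\ker(A-I)^k$ has degree at most $q-p+k-1\leq q-3$, so $xf$ stays inside $V[x]$.
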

\begin{proof}
A basis for $\ker (A-I)$ is given by the monomials $x^{p^i}$ and the polynomials $(x^p - x)^i$, for appropriate values of $i$. We first demonstrate that $x^{p^i + 1}$ and $x(x^p-x)^i$ are in $\ker (A-I)^2$. By evaluation, one gets $\phi(x^{p^i+1}) = (x+1)(x+1)^{p^i} - 1 = x^{p^i + 1} + x^{p^i} + x$ and therefore $(A-I)x^{p^i+1} \in \ker (A-I)$. Similarly, $\phi(x(x^p-x)^i) = (x+1)(x^p-x)^i$ and therefore $(A - I)(x(x^p - x)^i) = (x^p - x)^i \in \ker (A - I)$. Thus for each basis element $f(x)$ in the eigenspace of $A$, we found $xf(x)$ in $\ker (A - I)^2$. As these new polynomials have degrees distinct from the basis vectors of $\ker (A - I)$, therefore one concludes that $\dim(\ker (A-I)^2) = 2p^{n-1}$.  

We now use induction. Consider $\ker(A-I)^k$ which has dimension $kp^{n-1}$. One can form a basis of $\ker (A-I)^k$ which contains $p^{n-1}$ vectors of the form $x^{p^i+k-1}$ and $x^{k-1}(x^p-x)^i$ (for appropriate $i$) which are not contained in $\ker(A-I)^{k-1}$. It is enough to show that $xf(x) \in \ker (A-I)^{k+1}$ for every one of these basis vectors $f(x)$. Observe that  $(x+1)^{p^i + k} - 1 = (x+1)^k(x^{p^i} + 1) - 1$ and once $x^{p^i + k}$ is removed from that expression, all the other monomials are contained in $\ker (A-I)^k$. Thus, $x^{p^i + k} \in \ker (A-I)^{k+1}$. Similarly, $(x+1)^k((x+1)^p - (x+1))^i = (x+1)^k(x^p - x)^i$ and once you remove $x^k(x^p-x)^i$ from that expression, one is left with terms of the form $x^{k-j}(x^p-x)^i$ all of which are contained in $\ker (A-I)^k$.  
\end{proof}

Due to \lref{lem:x_times_fx}, the kernels can be generalized as follows.
\begin{theorem}\label{thm:gen_basis}
$\ker(A-I)^m$ over $\mathbb{F}_{p^n}$ is spanned by $\langle x^j(x^p-x)^i, x^k\rangle$; where $j=0,1,2, \ldots, m-1$; $i=1,2, \ldots, p^{n-1}-1$ and $k=1,2, \ldots, m$.
\end{theorem}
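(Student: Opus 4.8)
The plan is to prove the statement (for the intended range $1 \le m \le p-1$ flagged in the discussion preceding it) by sandwiching $\dim\ker(A-I)^m$ between matching lower and upper bounds of $mp^{n-1}$, and then observing that the listed polynomials are exactly $mp^{n-1}$ linearly independent members of the space. Write $N = A - I$. Since $A^p = I$ and the characteristic is $p$, \pref{thm:binomial} (applied in the commutative subring generated by $A$ and $I$) gives $N^p = (A-I)^p = A^p - I = 0$, so $N$ is nilpotent and the kernels form a chain $\ker N \subseteq \ker N^2 \subseteq \cdots \subseteq \ker N^p = V[x]$.

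First I would place every listed polynomial inside $\ker N^m$. By \lref{lem:first_kernel} both $x$ and each $(x^p-x)^i$ (for $i=1,\dots,p^{n-1}-1$) lie in $\ker N = \ker(A-I)$. Applying \lref{lem:x_times_fx} repeatedly, which carries $\ker N^k$ into $\ker N^{k+1}$ under multiplication by $x$, one gets $x^j(x^p-x)^i \in \ker N^{j+1}\subseteq \ker N^m$ whenever $j \le m-1$, and $x^k = x^{k-1}\cdot x \in \ker N^k \subseteq \ker N^m$ whenever $k \le m$. This puts all $m(p^{n-1}-1)+m = mp^{n-1}$ listed polynomials in $\ker N^m$.

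Next I would show these $mp^{n-1}$ polynomials are linearly independent by a distinct-degree (leading-term) argument. The polynomial $x^j(x^p-x)^i$ has degree $pi+j$; since $0 \le j \le m-1 \le p-2 < p$, the pair $(i,j)$ is recovered as the quotient and remainder of $pi+j$ upon division by $p$, so distinct pairs yield distinct degrees, all of which are $\ge p$. The monomials $x^k$ for $k=1,\dots,m$ have degrees $k \le m \le p-1 < p$ and so cannot collide with any $x^j(x^p-x)^i$. A check that the largest degree $(m-1)+p(p^{n-1}-1) = p^n - p + m - 1 \le p^n - 2 = q-2$ confirms all of them genuinely lie in $V[x]$. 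Distinct degrees force linear independence, giving $\dim\ker N^m \ge mp^{n-1}$.

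The main obstacle is the matching upper bound, since the explicit construction alone gives only a lower bound; naively reading the dimension off \lref{lem:x_times_fx} is incomplete. Here I would invoke the standard fact that for a nilpotent operator the successive jumps $\delta_k := \dim\ker N^k - \dim\ker N^{k-1}$ are non-increasing: the map $v \mapsto Nv$ descends to a well-defined injection $\ker N^{k+1}/\ker N^{k} \hookrightarrow \ker N^{k}/\ker N^{k-1}$ (if $Nv \in \ker N^{k-1}$ then $N^k v = 0$, i.e.\ $v \in \ker N^{k}$), whence $\delta_{k+1}\le \delta_k$. Since $\delta_1 = \dim\ker(A-I) = p^{n-1}$ by \lref{lem:first_kernel}, we get $\delta_k \le p^{n-1}$ for every $k$, so $\dim\ker N^m = \sum_{k=1}^{m}\delta_k \le mp^{n-1}$. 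Combined with the lower bound this yields $\dim\ker N^m = mp^{n-1}$, so the $mp^{n-1}$ independent polynomials already exhibited form a basis, which proves the claim.
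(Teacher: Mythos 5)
Your proof is correct, and it is actually more complete than the paper's own treatment, which presents this theorem as an immediate consequence of \lref{lem:x_times_fx}. The paper's route is: multiplication by $x$ pushes the new basis vectors of each kernel into the next one, the resulting polynomials have degrees distinct from the earlier ones, and from this it concludes $\dim\ker(A-I)^k = kp^{n-1}$ and hence the spanning claim. But that reasoning only yields the lower bound $\dim\ker(A-I)^k \geq kp^{n-1}$; exhibiting that many independent vectors cannot by itself show that they span. (For $k=1$ the paper does have an implicit upper bound via its degree-constraint lemma on eigenvectors, but nothing analogous is offered for $k\geq 2$.) Your membership-and-independence half --- iterating multiplication by $x$, then reading off the distinct degrees $pi+j$ with $0 \leq j \leq m-1 < p$ for the products and $k \leq m < p$ for the monomials --- is essentially the paper's construction, stated a bit more cleanly, since your full list of $mp^{n-1}$ polynomials has pairwise distinct degrees without the exclusions $i \neq p^k$ that \lref{lem:first_kernel} needs. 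The genuinely new ingredient is the matching upper bound: $N = A-I$ satisfies $N^p = A^p - I = 0$ in characteristic $p$, and the injection $\ker N^{k+1}/\ker N^{k} \hookrightarrow \ker N^{k}/\ker N^{k-1}$ induced by $N$ makes the successive dimension jumps non-increasing, hence all at most $\delta_1 = \dim\ker(A-I) = p^{n-1}$. This pins the dimension at exactly $mp^{n-1}$ and converts the construction into a proof of spanning --- precisely the step the paper leaves unjustified. One cosmetic remark: you cite \lref{lem:first_kernel} for $(x^p-x)^i \in \ker(A-I)$ for all $i = 1, \ldots, p^{n-1}-1$, whereas that lemma's list omits $i=1$ and $i=p^k$; those cases follow from the same computation (indeed $(x^p-x)^{p^k} = x^{p^{k+1}} - x^{p^k}$ is a difference of eigenvector monomials), so nothing is lost.
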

And polynomials in $\ker(A-I)^m$ are of the shape
\begin{equation}\label{eq:gen_kernel_shape}
f(x)=\sum_{i=1,i\neq p^k}^{p^{n-1}-1} \sum_{j=0}^{m-1}c_{ij}x^j(x^p-x)^i+L(x)
\end{equation}
where $L(x)$ is a linearized polynomial of $\mathbb{F}_q$.\\
Literature contains studies on permutation polynomials of the kind $x^jh(x^{\frac{q-1}{d}})$. Notice that such PPRs appear as generalized eigenvectors of $A$, that is they appear in $\ker (A-I)^j$. 

We have discussed the eigenvectors and generalized eigenvectors of the linear map $A$ for extension fields $\mathbb{F}_{p^n}$. Similar results hold for the base field $\mathbb{F}_p$, though with a subtle twist that one should be alert about. In this case, $A$ is a $(p-2) \times (p-2)$ matrix, as $\dim(V[x]) = p-2$. On the other hand, $A^p = I$ as before. Further, as $x^p \equiv x$ on $\mathbb{F}_p$ as far as evaluation map is concerned, therefore $A$ has only one eigenvector, namely $x$. Note that $\alpha x$ (for $\alpha \in \mathbb{F}_p$) are the only linearized polynomials and they are permutation polynomials. 

\tref{thm:gen_basis} has a straightforward corollary over $\mathbb{F}_p$.
\begin{corollary}\label{cor:generalized_kernel_Fp}
$ker(A-I)^m$ over $\mathbb{F}_p$ is spanned by $\langle x, x^2, x^3, \ldots, x^m\rangle$.
\end{corollary}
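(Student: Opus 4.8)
The plan is to obtain \cref{cor:generalized_kernel_Fp} by specializing \tref{thm:gen_basis} to the case $n=1$, since $\mathbb{F}_p = \mathbb{F}_{p^1}$. The spanning set furnished by \tref{thm:gen_basis} splits into two families: the pure monomials $x^k$ with $1 \le k \le m$, and the mixed terms $x^j(x^p-x)^i$ with $0 \le j \le m-1$ and $1 \le i \le p^{n-1}-1$. Setting $n=1$ gives $p^{n-1} = p^0 = 1$, so the index $i$ would have to satisfy $1 \le i \le 0$, a range that is empty. Hence none of the mixed terms survive, and the spanning set collapses to exactly $\{x, x^2, \ldots, x^m\}$, which is the assertion of the corollary.

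For a self-contained verification of both inclusions I would argue directly. For $\langle x, x^2, \ldots, x^m \rangle \subseteq \ker(A-I)^m$, start from the fact that $x$ is an eigenvector of $A$, so $x \in \ker(A-I)$, and apply \lref{lem:x_times_fx} repeatedly: multiplication by $x$ carries $\ker(A-I)^k$ into $\ker(A-I)^{k+1}$, whence $x^k \in \ker(A-I)^k \subseteq \ker(A-I)^m$ for every $k \le m$. For the reverse inclusion I would count dimensions. Over $\mathbb{F}_p$ the eigenvalue $1$ has geometric multiplicity one, by \lref{lem:only_1} together with the observation (already recorded in the text) that $x$ is the unique eigenvector up to scaling; moreover $(A-I)^p = A^p - I = 0$ by \pref{thm:binomial}, so $A-I$ is nilpotent. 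These two facts force the Jordan form of $A$ to be a single block of size $\dim V[x] = p-2$, and therefore $\dim \ker(A-I)^m = m$ for $1 \le m \le p-2$. Since the monomials $x, x^2, \ldots, x^m$ are linearly independent (they have distinct degrees) and number exactly $m$, they form a basis, which yields equality.

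The point to be careful about is the valid range of $m$, which is precisely the ``subtle twist'' flagged earlier in the text. The extension-field formula $\dim \ker(A-I)^k = k p^{n-1}$ specializes at $n=1$ to $\dim \ker(A-I)^k = k$, which at $k = p-1$ would read $p-1 > p-2 = \dim V[x]$, an impossibility. The resolution, and the genuine reason the mixed family disappears, is that $x^p - x$ has degree $p > p-2$ and so cannot lie in $V[x]$ at all when $n=1$: there is simply no room for the $(x^p-x)^i$ generators. I would therefore state and prove the corollary for $1 \le m \le p-2$, noting that $m = p-2$ recovers all of $V[x]$, and treat the breakdown of the naive dimension formula at large $k$ not as an obstacle but as exactly the degree-wise distribution phenomenon the corollary is recording.
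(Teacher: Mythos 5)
Your proposal is correct and takes the same route as the paper, whose proof of this corollary is exactly the specialization you give first: in \tref{thm:gen_basis} with $n=1$ the index range $1 \le i \le p^{n-1}-1 = 0$ for the mixed generators $x^j(x^p-x)^i$ is empty (indeed $(x^p-x)^i$ does not even live in $V[x]$ over $\mathbb{F}_p$), leaving only the monomials $x, x^2, \ldots, x^m$. Your supplementary self-contained verification (geometric multiplicity one plus nilpotency of $A-I$ forcing a single Jordan block of size $p-2$, hence $\dim \ker(A-I)^m = m$ for $1 \le m \le p-2$) and your explicit restriction on the range of $m$ are both sound and consistent with what the paper records only implicitly, e.g.\ its remark that the minimal polynomial over $\mathbb{F}_p$ is $(\lambda-1)^{p-2}$.
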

Thus, all polynomials in $V[x]$ are parsed degree-wise by $ker(A-I)^m$. So, all degree $d$ polynomials in $V[x]$ appear for the first time in $ker(A-I)^d$. Thus, if one were to find a degree distribution of PPRs over a field $\mathbb{F}_p$, then this is equivalent to counting the PPRs present in succesive kernels of $(A-I)^m$. This encourages one to perform a similar exercise on successive kernels of $(A-I)^m$ for the extension fields ($\mathbb{F}_{p^n}$) too.

Before closing this section, let us recall properties of the linear map $\phi$ or its representation matrix $A$. Note that $A$ is a $(q-2) \times (q-2)$ matrix. 
\begin{itemize}
\item Order of the matrix $A$ is $p$. (\lref{lem:orderof_phi})
\item Dimension of $ker(A-I)^p$ is $q-2$, the full space $V[x]$.
\item $Ax^{p^k}=x^{p^k}$ from \lref{lem:monomial_in_ker}. Therefore, $x^{p^k}$ is an eigenvector of $A$.
\item The characteristic polynomial of $A$ is $(\lambda-1)^{q-2}, (q \neq p)$ with $\lambda=1$ being the only characteristic value. 
\item The minimal polynomial of $A$ is $(\lambda-1)^p$ for $(q \neq p)$.
\item The characteristic and minimal polynomials are the same when $q=p$ and that is $(\lambda-1)^{p-2}$.
\item Dimension of eigenspace of $A$ is $p^{n-1}$, that is, geometric multiplicity of the characteristic value $\lambda=1$ is $p^{n-1}$.
\item Dimension of the generalized eigenspaces of $A$ that is $\dim(\ker(A-I)^k)$ is $kp^{n-1}$ for $k=1,2,\ldots, p-1$.
\end{itemize}

In the study of PPRs, it would make sense to classify PPRs in terms of the generalized eigenspaces of the matrix $A$. As we have mentioned, various shapes of permutation polynomials studied in literature can be identified with the specific stage $j$ such that they belong to $\ker (A-I)^j$ but do not belong to $\ker (A-I)^i$ for $i < j$. For the case of $\mathbb{F}_p$, this separation of PPRs by the stage at which they make their first appearance is precisely equivalent to the degree classification of PPRs. So in some sense, $A$ helps us in classifying PPRs for $\mathbb{F}_q$ that resembles the normal degree classification in $\mathbb{F}_p$. The dimension of the eigenspace of $A$ is pretty large ($p^{n-1}$) and therefore it would be helpful if one can find other classifications of the PPRs.

\section{Other `$A$-like' Matrices}
As it turns out, the linear map $A$ is just one among many similar linear maps, having isomorphic properties. We therefore look at a family of linear maps $A_r$ which are isomorphisms of the space of polynomials $V[x]$ introduced in the previous section. This family of isomorphisms can be parametrized by the elements $r \in \mathbb{F}_q$. Let 
\begin{eqnarray*}
A_r & : & V[x] \rightarrow V[x] \\ & & f(x) \rightarrow f(x+r)-f(r)
\end{eqnarray*}
where $r$ is an element of $\mathbb{F}_q$. Observe that $A_0$ is identity map. The map $A$ in the previous section should now be $A_1$. These maps commute : 

\begin{lemma}\label{lem:commuting_matrices}
For $r,s \in \mathbb{F}_q$; $A_r[A_s[f(x)]]=A_s[A_r[f(x)]]= A_{r+s}[f(x)]$. 
\end{lemma}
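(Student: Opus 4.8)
The plan is to prove all three equalities by direct substitution, treating the definition $A_r[f(x)] = f(x+r) - f(r)$ as a recipe and unwinding the outer composition one layer at a time. This is essentially the same computation that \lref{lem:orderof_phi} carried out for the special case $r=s=1$ in the guise of $\phi^2$, so I would mirror that bookkeeping.

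First I would abbreviate the inner map by setting $g(x) = A_s[f(x)] = f(x+s) - f(s)$, and observe that $g \in V[x]$: indeed $g(0) = f(s) - f(s) = 0$, and the substitution $x \mapsto x+s$ preserves degree, so $\deg g = \deg f \leq q-2$. This well-definedness check is the only place where one must be a little careful, since it guarantees that applying $A_r$ to $g$ again stays inside $V[x]$ with no reduction modulo $x^q - x$ required. Then I would compute
\[
A_r[g(x)] = g(x+r) - g(r) = \bigl(f(x+r+s) - f(s)\bigr) - \bigl(f(r+s) - f(s)\bigr) = f(x+r+s) - f(r+s).
\]
The two occurrences of $f(s)$ cancel, and the right-hand side is exactly $A_{r+s}[f(x)]$ by definition. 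This establishes $A_r[A_s[f(x)]] = A_{r+s}[f(x)]$.

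For the remaining equality I would simply interchange the roles of $r$ and $s$ in the computation above to obtain $A_s[A_r[f(x)]] = A_{s+r}[f(x)]$, and then invoke commutativity of addition in $\mathbb{F}_q$ to conclude $A_{s+r} = A_{r+s}$. Hence all three expressions coincide.

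I do not anticipate a genuine obstacle here: the argument is a one-line telescoping cancellation, and the cancellation of the constant terms $f(s)$ is what makes the group law additive rather than something more complicated. The only subtlety worth flagging explicitly is the degree/base-point check in the first paragraph, which confirms the compositions never leave $V[x]$; everything else is routine substitution.
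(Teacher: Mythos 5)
Your proof is correct and follows essentially the same route as the paper: a direct substitution showing the inner constants $f(s)$ telescope, yielding $f(x+r+s)-f(r+s)=A_{r+s}[f(x)]$, with commutativity of addition in $\mathbb{F}_q$ handling the remaining equality. Your added check that $A_s[f(x)]$ stays in $V[x]$ is a small tidiness bonus the paper omits, but it does not change the argument.
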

\begin{proof}
\[
A_s[f(x)]=f(x+s)-f(s)
\]
\begin{eqnarray*}
A_r[A_s[f(x)]] & = & (f(x+r+s)-f(s)) - (f(r+s)-f(s)) \\ & = & f(x+r+s) - f(r+s) \\ & = & (f(x+s+r)-f(r)) - (f(r+s)-f(r)) \\ & = & A_s[A_r[f(x)]]
\end{eqnarray*}
\end{proof}
Thus, matrix multiplication in this family of $(q-2) \times (q-2)$ matrices captures the Abelian structure of addition in $\mathbb{F}_q$. By \lref{lem:commuting_matrices}, $(A_r)^2=A_rA_r=A_{2r}$ and eventually  $(A_r)^p=A_{pr}=A_0=I$. Thus, the cyclic group generated by any $A_r$ has order $p$. Following \lref{lem:only_1}, all $A_r$ have only $1$ as a possible eigenvalue. Following the other lemmas, one can further say that $\dim(\ker (A_r - I)^k) = kp^{n-1}$. It is also clear that the monomials $x^{p^i}$ for $i = 0,1, \ldots, n-1$ are eigenvectors of $A_r$ for any $r \in \mathbb{F}_q$. Thus, every linearized polynomial is an eigenvector of all $A_r$. 

The eigenvectors of $A_r$ which are not monomials may differ depending on $r$. By geometry, the set of $\mathbb{F}_p$-multiples of an element $r$, like $r,2r,\ldots,(p-1)r$ is the ``line'' corresponding to $r$. The number of lines in $\mathbb{F}_q$ is equal to $\ell_q = \frac{p^n-1}{p-1}$. Corresponding to every $r \in \mathbb{F}_q$, one can find $b \in \mathbb{F}_q$ which is a $\ell_q$-th root of unity, such that 
\begin{lemma}\label{lem:line_of_r}
$\ker(x^p-bx)$ is precisely the entire line corresponding to $r$ where $b = r^{p-1}$ is a $\ell_q$-th root of unity.
\end{lemma}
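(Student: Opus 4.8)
The plan is to factor $x^p - bx = x\,(x^{p-1}-b)$ and match its roots against the line by a short counting argument; an equivalent and slightly cleaner viewpoint is that $x^p - bx$ is itself a linearized (additive) polynomial, so its kernel is automatically an $\mathbb{F}_p$-subspace of $\mathbb{F}_q$ and the whole task reduces to pinning down that subspace.

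First I would prove the forward inclusion. For any $\alpha \in \mathbb{F}_p^*$, Fermat's little theorem gives $\alpha^{p-1}=1$ in $\mathbb{F}_p$, so
\[
(\alpha r)^{p-1} = \alpha^{p-1}\,r^{p-1} = r^{p-1} = b,
\]
and hence $\alpha r$ is a root of $x^{p-1}-b$; together with the obvious root $0$ this shows that the whole line $\{0,r,2r,\ldots,(p-1)r\}$ lies in $\ker(x^p-bx)$. The same identity simultaneously verifies the auxiliary claim that $b$ is an $\ell_q$-th root of unity: since $r\in\mathbb{F}_q^*$ satisfies $r^{p^n-1}=1$ and $\ell_q=(p^n-1)/(p-1)$,
\[
b^{\ell_q} = \bigl(r^{p-1}\bigr)^{(p^n-1)/(p-1)} = r^{\,p^n-1} = 1.
\]

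For the reverse inclusion I would count. Assuming $r\neq 0$, the map $\alpha\mapsto \alpha r$ is $\mathbb{F}_p$-linear and injective, so the line has exactly $p$ distinct elements. On the other hand $x^p-bx$ has degree $p$ over the field $\mathbb{F}_q$ and therefore has at most $p$ roots there; equivalently, viewed as a linearized polynomial with highest term $x^{p}=x^{p^1}$, its kernel has $\mathbb{F}_p$-dimension at most $1$, again at most $p$ elements. Since the forward step already produced $p$ distinct roots lying on the line, these exhaust all the roots, and $\ker(x^p-bx)$ coincides with the line corresponding to $r$. (The degenerate case $r=0$ gives $b=0$ and $x^p-bx=x^p$, whose only root is $0$, which is exactly the line $\{0\}$.)

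There is no serious obstacle here; the only point requiring care is that the two bounds meet exactly at $p$ --- the line must contain $p$ \emph{genuinely distinct} points, which is where $r\neq 0$ is used, and a degree-$p$ polynomial over a field cannot acquire more than $p$ roots. Once both counts are pinned to $p$, the two inclusions are forced to collapse into equality.
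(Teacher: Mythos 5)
Your proof is correct, and its first half is essentially the paper's own argument: the paper shows $ir$ lies in the kernel by writing $(ir)^p - b(ir) = i^p r^p - ibr = i(r^p - br) = 0$, using that the Frobenius fixes $i \in \mathbb{F}_p$, which is the same computation as your $\alpha^{p-1} = 1$ step phrased through Fermat's little theorem; the verification that $b^{\ell_q} = r^{p^n-1} = 1$ is identical in both.

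Where you genuinely add something is the reverse inclusion. The paper's proof stops after establishing that the line is \emph{contained} in $\ker(x^p - bx)$ and that $b$ is an $\ell_q$-th root of unity; it never argues that the kernel contains nothing else, even though the lemma asserts the kernel is \emph{precisely} the line. Your counting step closes this gap cleanly: the line has exactly $p$ distinct elements when $r \neq 0$, while $x^p - bx$, being of degree $p$ over a field (equivalently, a linearized polynomial whose kernel is an $\mathbb{F}_p$-subspace of dimension at most $1$), has at most $p$ roots, so the two sets must coincide. This makes your proof strictly more complete than the one in the paper. One small caveat: the paper's ``line'' is described as the set $r, 2r, \ldots, (p-1)r$ of nonzero $\mathbb{F}_p$-multiples, whereas the kernel of $x^p - bx$ necessarily contains $0$; your reading of the line as the full $\mathbb{F}_p$-span (including $0$) is the one under which the stated equality actually holds, so this is a point in your favor rather than an error. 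The degenerate case $r = 0$ you mention is vacuous here, since then $b = 0$ is not a root of unity and the lemma's hypothesis excludes it.
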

\begin{proof}
$r\in ker(x^p-bx)$ implies $r^p=br$ or $b = r^{p-1}$. For any $\mathbb{F}_p$-multiple of $r$, say, $ir$, we have $((ir)^p-b(ir))=(i^pr^p-ibr)=i(r^p-br)=0$ and therefore, the entire line is in $ker(x^p-bx)$. 
Since $1 = r^{p^n-1} = b^{(p^n-1)/(p-1)}$, $b$ is a $\ell_q$-th root of unity.
\end{proof}
The structure of the eigenspaces of $A_r$ is isomorphic to that of $A$ given in the previous section.
\begin{theorem}\label{thm:kernel_of_A}
The eigenspace of $A_r$ is spanned by the monomials $\langle x, x^p, x^{p^2}, \- \ldots, x^{p^{n-1}}\rangle$ and the polynomials  $(x^p-bx)^i$ for all $i$ from $2,  \ldots, p^{n-1}-1$ where $i$ is not a power of $p$. Here $b = r^{p-1}$ is a $\ell_q$-th root of unity.
\end{theorem}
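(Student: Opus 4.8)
The plan is to mimic the proof of \lref{lem:first_kernel} (which is the case $r=1$, $b=1$), replacing the linearized polynomial $x^p-x$ by $h(x):=x^p-bx$. The key algebraic fact is supplied by \lref{lem:line_of_r}: since $b=r^{p-1}$ we have $r^p=br$, hence $h(r)=r^p-br=0$, and by \pref{thm:binomial},
\[
h(x+r)=(x+r)^p-b(x+r)=x^p+r^p-bx-br=h(x)+(r^p-br)=h(x),
\]
so $h$ is invariant under the shift $x\mapsto x+r$. First I would verify that every listed polynomial is fixed by $A_r$; since $1$ is the only eigenvalue (\lref{lem:only_1}), ``eigenvector'' means ``fixed vector'' here. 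For the monomials, $A_r[x^{p^k}]=(x+r)^{p^k}-r^{p^k}=x^{p^k}$ by \pref{thm:binomial}, as already noted in the text. For the powers of $h$, using $h(x+r)=h(x)$ and $h(r)=0$,
\[
A_r\bigl[h(x)^i\bigr]=h(x+r)^i-h(r)^i=h(x)^i,
\]
so each $(x^p-bx)^i$ is indeed an eigenvector of $A_r$.

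Next I would establish linear independence by a degree argument. The monomial $x^{p^k}$ has degree $p^k$, while $(x^p-bx)^i$ has leading term $x^{pi}$ and hence degree $pi$. Because $i$ is assumed \emph{not} to be a power of $p$, the integer $pi$ is not a power of $p$ either (as $pi=p^k$ would force $i=p^{k-1}$), so no degree $pi$ coincides with a monomial degree $p^k$; moreover distinct values of $i$ yield distinct $pi$. Thus all the listed vectors have pairwise distinct degrees and are therefore linearly independent. One also checks that these polynomials genuinely lie in $V[x]$, since $pi\le p(p^{n-1}-1)=p^n-p\le q-2$.

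Finally I would count, matching against the known dimension $\dim(\ker(A_r-I))=p^{n-1}$, which is the $k=1$ case of the dimension formula inherited (via \lref{lem:first_kernel} and \lref{lem:x_times_fx}) for all the $A_r$. There are $n$ monomials. The index $i$ ranges over $\{2,\ldots,p^{n-1}-1\}$, a set of $p^{n-1}-2$ integers, from which we delete the powers of $p$ lying in that range, namely $p,p^2,\ldots,p^{n-2}$ ($n-2$ of them); this leaves $p^{n-1}-n$ polynomials of the form $(x^p-bx)^i$. Altogether we obtain $n+(p^{n-1}-n)=p^{n-1}$ linearly independent eigenvectors, so they form a basis of the eigenspace, proving the theorem.

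The two evaluations are routine. The one step that needs care is the bookkeeping that forces the count to land on exactly $p^{n-1}$: one must justify why the powers of $p$ are excluded, namely that $(x^p-bx)^{p^k}=x^{p^{k+1}}-b^{p^k}x^{p^k}$ (again by \pref{thm:binomial}) is already a linear combination of the listed monomials and hence redundant, and then confirm that after this exclusion the leading degrees $pi$ of the surviving $h$-powers avoid every $p^k$. This interplay between the ``not a power of $p$'' condition and the degrees $pi$ is the main, if modest, obstacle; everything else follows the template of \lref{lem:first_kernel}.
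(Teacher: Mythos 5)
Your proof is correct and follows essentially the same route as the paper's: verify that each listed polynomial is fixed by $A_r$ (using $r^p = br$ to kill the shift), note linear independence, and match the count against the known dimension $\dim(\ker(A_r - I)) = p^{n-1}$. The paper's own proof is terser—it asserts the cardinality and independence and only writes out the computation $A_r[(x^p-bx)^i] = (x^p-bx+r^p-br)^i - (r^p-br)^i$—so your version simply makes explicit the degree argument and the bookkeeping that the paper leaves implicit.
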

\begin{proof}
Note that cardinality of the set $\langle x, x^p, x^{p^2},\ldots, x^{p^{n-1}}, (x^p-bx)^i\rangle$ for $i=2,3,\ldots, p^{n-1}-1$ which is not a power of $p$, is indeed $p^{n-1}$ which is equal to the dimension of the eigenspace of $A_r$ ($\dim(\ker(A_r-I)) = p^{n-1}$). Also, the set is linearly independent. What remains to be shown is that $(x^p-bx)^i$ is an eigenvector of $A_r$. 
\[
A_r[(x^p-bx)^i] = ((x+r)^p-b(x+r))^i-(r^p-br)^i = (x^p -bx + r^p - br)^i - (r^p-br)^i
\]
Therefore, $(x^p-bx)^i$ is an eigenvector $A_r$ as $r^p=br$ or $r^{p-1}=b$. 
\end{proof}
Literature abounds in studies of permutation polynomials of the form $(x^p - x)^i + L(x)$, $(x^p - x + \delta)^i + L(x)$ and so on, where $L(x)$ is of the form $\alpha x^p + \beta x$, $\delta \in \mathbb{F}_q$. The PPRs of all of these permutation polynomials appear as eigenvectors of the matrix $A$ from the previous section or $A_1$ as it should be named as per this section. Note that these permutation polynomials are in fact a special case of polynomials in \tref{thm:kernel_of_A}, where $b = 1 = r^{p-1}$ with $r \in \mathbb{F}_p$, that is, they correspond to the specific ``line'' of elements in $\mathbb{F}_p \subset \mathbb{F}_q$. Moreover, if $L(x)$ is permitted to be any linearized polynomial, then these include many more polynomials than those studied in literature for the case where $q = p^n$ and $n > 2$. There are some studies on PPs of this shape for $\mathbb{F}_{q^2}$, but they give $L(x)$ of very specific shapes. In fact, there may be permutation polynomials of the form $(x^p - bx)^i + L(x)$ or $(x^p - bx + \delta)^i + L(x)$ for $b$ which is a $\ell_q$-th root of unity, which have not been captured by the studies in literature. There exists a symmetry in the shapes of PPRs that appear in the eigenspaces of various $A_r$. We argue that this symmetry can be exploited to study a wider class of permutation polynomials. Of course, some of these permutation polynomials may already be included in the studies in literature, especially for the specific case of $\mathbb{F}_{p^2}$.

Note that though there are a total of $q$ matrices $A_r$, one needs to only consider a subset of these matrices. For example, $A_0$ is the identity matrix and therefore gives no information in terms of eigenspaces. Similarly, the eigenstructure of $A_r$ is exactly same as the eigenstructure of $A_{ir}$ where $i \in \mathbb{F}_p^*$.
\begin{lemma}\label{lem:same_kernel}
$\ker(A_r - I)^k = \ker (A_{ir} - I)^k$ for all $r \in \mathbb{F}_q$, $i \in \mathbb{F}_p^*$ and $k = 1,2, \ldots, p$.
\end{lemma}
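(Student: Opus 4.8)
The plan is to exploit the group structure recorded in \lref{lem:commuting_matrices}, which says that the family $\{A_r\}$ represents the additive group of $\mathbb{F}_q$. In particular, repeated application gives $A_{ir} = (A_r)^i$, where on the left $ir$ is the $\mathbb{F}_p$-multiple of $r$ and on the right $i \in \{1, \ldots, p-1\}$ is an ordinary integer exponent. This reduces the whole statement to comparing the kernels of the powers of $A_r - I$ with those of $(A_r)^i - I$.

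First I would set $N = A_r - I$ and record that $N$ is nilpotent. Since $(A_r)^p = I$ and the subring $\mathbb{F}_p[N]$ is commutative of characteristic $p$, \pref{thm:binomial} gives $(I+N)^p = I + N^p = (A_r)^p = I$, hence $N^p = 0$ (equivalently, $1$ is the only eigenvalue of $A_r$ by \lref{lem:only_1}). Next, expanding by the ordinary binomial theorem (legitimate because $N$ commutes with $I$),
\[
A_{ir} - I = (I+N)^i - I = iN + \binom{i}{2}N^2 + \cdots + N^i = N\,u,
\]
where $u = iI + \binom{i}{2}N + \cdots + N^{i-1}$. The matrix $u$ is a polynomial in $N$, so it commutes with $N$, and it has the shape $iI + (\text{nilpotent})$; because $i \in \mathbb{F}_p^*$ is a unit and the remaining terms are multiples of the nilpotent $N$, the matrix $u$ is invertible.

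Finally, using that $N$ and $u$ commute, $(A_{ir} - I)^k = (Nu)^k = N^k u^k$ with $u^k$ invertible. Hence for any vector $v$ one has $N^k u^k v = 0 \iff N^k v = 0$ (apply $u^{-k}$), so $\ker((A_{ir}-I)^k) = \ker(N^k) = \ker((A_r - I)^k)$ for every $k = 1, \ldots, p$, which is exactly the claim.

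The binomial expansion and the invertibility of $u$ are routine; the point to get right, and really the crux, is the reduction $A_{ir} = (A_r)^i$ together with the observation that the correction factor $u$ is invertible \emph{precisely} because $i \not\equiv 0 \bmod p$. This is where the hypothesis $i \in \mathbb{F}_p^*$ is essential: if $i$ were a multiple of $p$, the leading term $iI$ would vanish, $u$ could fail to be invertible, and the kernels would no longer be forced to agree.
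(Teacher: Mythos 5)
Your proof is correct. There is nothing in the paper to compare it against: the paper's entire ``proof'' of this lemma is the phrase ``Elementary computation,'' so your write-up supplies an argument where the authors gave none. Your route is the natural one and all steps check out: iterating \lref{lem:commuting_matrices} gives $A_{ir}=(A_r)^i$ with $i$ read as an ordinary exponent; $N=A_r-I$ satisfies $N^p=0$ by \pref{thm:binomial} applied in the commutative ring generated by $N$ (consistent with \lref{lem:only_1}); the ordinary binomial expansion gives $A_{ir}-I=Nu$ with $u=iI+\binom{i}{2}N+\cdots+N^{i-1}$, which is invertible precisely because $i\in\mathbb{F}_p^*$ makes the constant term a unit while the rest is nilpotent; and commutativity of $N$ and $u$ yields $(A_{ir}-I)^k=N^ku^k$, whence $\ker(A_{ir}-I)^k=\ker N^k$ for every $k$, including $k=p$ where both sides are all of $V[x]$. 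You also correctly flag the one place the hypothesis $i\not\equiv 0 \bmod p$ enters, which is exactly the point an ``elementary computation'' could silently get wrong; note that this factorization argument in fact shows the two operators $A_r-I$ and $A_{ir}-I$ have identical kernel filtrations, i.e.\ the full generalized eigenspace structure coincides, which is the strongest form of the claim one could want.
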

\begin{proof}

Elementary computation.
\end{proof}
\lref{lem:same_kernel} suggests that it is enough to look at one $A_r$ corresponding to each ``line'' in $\mathbb{F}_q$. Since there are $\ell_q$ lines, it suffices to consider $\ell_q$ matrices $A_r$, to capture all possible variations. Notice that the linearized polynomials are eigenvectors of every one of these matrices $A_r$. So, if one wants to classify PPRs based on these matrices, then it would make sense to not just view the eigenstructure of these matrices but explore common eigenspaces (that is intersections) of these subspaces. However, the number of matrices to consider is $\ell_q$ and this number may be daunting. We now propose a simplification, that captures information without a large computational head.

Let $a$ be a primitive element of $\mathbb{F}_q$.  
Then $1, a, a^2, \ldots , a^{n-1}$ are linearly independent vectors in $\mathbb{F}_q$ when viewed as a vector space over $\mathbb{F}_p$ (here $q = p^n$). We propose to use the matrices 
\[A_1, A_a, A_{a^2},\ldots, A_{a^{n-1}}
\]
Note that any other choice of linearly independent vectors in $\mathbb{F}_q$ yield similar results to what we list below. 
$\ker (A_{a^i}-I)^j$ are isomorphic to each other for each $j$. The PPRs contained in the various subspaces $\ker (A_{a^i}-I)^j$ are similar in shape to those contained in $\ker (A-I)^j$. We now explore the common generalized eigenspaces of the matrices $A_{a^i}$. For this, we define the intersection of these generalized eigenspaces as 
\begin{equation}\label{eq:kernelintersection}
V_k=\bigcap_{i=0}^{n-1} ker(A_{a^i}-I)^k
\end{equation}

Let us now look at $V_1$. This consists of the common eigenvectors of the matrices $A_{a^i}$. Clearly, the monomials $x^{p^i}$ for $i = 0,1,\ldots, n-1$ are all present in $V_1$. As a result, all linearized polynomials are in $V_1$. We have  
\begin{lemma}\label{lem:ker1_dim}
\begin{enumerate}
\item $\dim (V_1) = n$ and is spanned by monomials $\langle x, x^p, x^{p^2}, \ldots, x^{p^{n-1}} \rangle$. 
\item The number of PPRs in $V_1$ is $(p^n-p)(p^n-p^2)\ldots(p^n-p^{n-1})$ 
\item Compositional inverse of every PP in $V_1$ is also in $V_1$ 
\end{enumerate}
\end{lemma}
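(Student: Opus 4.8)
The plan is to prove part~(1) by identifying $V_1$ exactly as the space of linearized polynomials, and then to obtain parts~(2) and~(3) as fairly direct consequences of that identification. For the lower bound on $\dim(V_1)$, I would use that each monomial $x^{p^j}$ (for $j=0,\ldots,n-1$) is an eigenvector of every $A_r$, as recorded in the discussion preceding \tref{thm:kernel_of_A}; hence each lies in $\ker(A_{a^i}-I)$ for all $i$ and therefore in $V_1$. These $n$ monomials are linearly independent because they have distinct degrees, so $\dim(V_1)\ge n$.

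The matching upper bound is the crux of the whole lemma, and I expect it to be the main obstacle. I would take an arbitrary $f\in V_1$ and study the set $S=\{r\in\mathbb{F}_q: A_r f=f\}$. By \lref{lem:commuting_matrices} we have $A_{r+s}=A_rA_s$, so $A_rf=f$ and $A_sf=f$ give $A_{r+s}f=f$; thus $S$ is closed under addition, and together with \lref{lem:same_kernel} (closure under $\mathbb{F}_p$-multiples) this makes $S$ an $\mathbb{F}_p$-subspace of $\mathbb{F}_q$. Since $f\in V_1$ forces $\{1,a,a^2,\ldots,a^{n-1}\}\subseteq S$, and these form an $\mathbb{F}_p$-basis of $\mathbb{F}_q$, we conclude $S=\mathbb{F}_q$, i.e. the polynomial identity $f(x+r)=f(x)+f(r)$ holds for every $r\in\mathbb{F}_q$. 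It then remains to show that such an $f$ of degree $\le q-2$ with $f(0)=0$ must be a $p$-polynomial. Writing $f=\sum_k c_k x^k$ and comparing, for each fixed $j\ge 1$, the coefficient of $x^j$ on both sides of $f(x+r)-f(x)=f(r)$, one gets $\sum_{k>j}c_k\binom{k}{j}r^{k-j}=0$ as a polynomial in $r$ of degree $<q$; vanishing on all of $\mathbb{F}_q$ forces it to be the zero polynomial, so $c_k\binom{k}{j}=0$ for all $0<j<k$. Invoking the classical fact that $\binom{k}{j}\equiv 0\pmod p$ for all $0<j<k$ precisely when $k$ is a power of $p$, every $c_k$ with $k$ not a $p$-power vanishes, so $f\in\langle x,x^p,\ldots,x^{p^{n-1}}\rangle$. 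This yields $\dim(V_1)=n$ and identifies $V_1$ with the space of linearized polynomials.

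With $V_1$ so identified, part~(2) is a counting exercise. A linearized polynomial is exactly the evaluation map of an $\mathbb{F}_p$-linear endomorphism of $\mathbb{F}_q\cong\mathbb{F}_p^n$, and it is a permutation iff that endomorphism is invertible; hence the number of linearized permutation polynomials is $|GL_n(\mathbb{F}_p)|=(p^n-1)(p^n-p)\cdots(p^n-p^{n-1})$. To pass from PPs to PPRs, which must be monic, I would let $\mathbb{F}_q^*$ act by $L\mapsto\theta L$: this action sends linearized PPs to linearized PPs, is free (since $\theta L=L$ with $L\neq 0$ forces $\theta=1$), and preserves the degree while scaling the leading coefficient by $\theta$, so each orbit (of size $p^n-1$) contains exactly one monic element. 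Dividing, the number of PPRs in $V_1$ is $|GL_n(\mathbb{F}_p)|/(p^n-1)=(p^n-p)(p^n-p^2)\cdots(p^n-p^{n-1})$, as claimed.

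Finally, part~(3) follows from the same dictionary between linearized polynomials and $\mathbb{F}_p$-linear maps, now using that it respects composition. If $L\in V_1$ is a PP corresponding to the invertible linear map $M$, then its compositional inverse $L^{-1}$ (taken $\bmod~x^q-x$) represents $M^{-1}$, which is again an $\mathbb{F}_p$-linear bijection and hence again a linearized polynomial; by part~(1) it therefore lies in $V_1$.
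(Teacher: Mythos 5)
Your proposal is correct, and parts (2) and (3) follow essentially the paper's own route: the paper invokes Carlitz's correspondence between linearized polynomials and $\mathbb{F}_p^{n\times n}$, counts invertible matrices, and silently divides by $p^n-1$; your free scaling action of $\mathbb{F}_q^*$ on linearized PPs makes that normalization-to-monic step explicit, which is a small but genuine improvement in rigor. Where you genuinely diverge is the upper bound in part (1). The paper reuses its description of the individual eigenspaces (\tref{thm:kernel_of_A}): any $f \in V_1$ decomposes, for each $i$, into a linearized part plus a combination of the polynomials $(x^p-b_ix)^j$ with $b_i=a^{i(p-1)}$, and the claim is that the spans of these non-linearized pieces for distinct $b_i$ intersect trivially (``it is easy to see that this common intersection is $0$''). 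You instead observe that $S=\{r\in\mathbb{F}_q : A_rf=f\}$ is closed under addition by \lref{lem:commuting_matrices}, hence is an $\mathbb{F}_p$-subspace containing the basis $1,a,\ldots,a^{n-1}$, so the polynomial identity $f(x+r)=f(x)+f(r)$ holds for every $r\in\mathbb{F}_q$; comparing coefficients of each $x^j$ and using that a polynomial in $r$ of degree $<q$ vanishing on all of $\mathbb{F}_q$ is zero, you reduce to the classical fact that $\binom{k}{j}\equiv 0 \pmod p$ for all $0<j<k$ exactly when $k$ is a $p$-power. Your route is independent of \tref{thm:kernel_of_A} and fills in precisely the step the paper waves at: the paper's restriction to the monomial subspace $W$ glosses over the fact that monomials occurring in $(x^p-bx)^j$ for different $j$ can coincide in degree, a bookkeeping issue your argument never encounters. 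The price is the extra (standard) Lucas-type input on binomial coefficients; the gain is a fully self-contained identification of $V_1$ as exactly the space of additive polynomials, from which (2) and (3) drop out immediately.
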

\begin{proof}
\begin{enumerate}
\item It is clear that $\dim (V_1)$ is at least $n$ since the monomials $x^{p^i}$ are in $V_1$. As all the polynomials $(x^p - bx)^j$ for a particular $j$ are made up of very specific monomials, it suffices to restrict one's attention to the subspace (say $W$) spanned by these monomials. Observe that $\dim (W \cap \ker (A_{a^i}-I) = 1$. Thus the common intersection of $n$ one dimensional subspaces of $W$ is contained in $V_1$. It is easy to see that this common intersection is ${0}$. Therefore $\dim (V_1)$ is precisely $n$.  

\item Consider $n \times n$ matrices with entries from $\mathbb{F}_p$. Carlitz \cite{Carl-found} has established an isomorphism between linearized polynomials in $\mathbb{F}_{p^n}$ and $\mathbb{F}_p^{n \times n}$. Thus every invertible matrix in $\mathbb{F}_p^{n \times n}$ corresponds to a linearized polynomial which is a permutation polynomial. The number of such invertible  matrices is  $(p^n-1)(p^n-p)(p^n-p^2)\ldots(p^n-p^{n-1})$. Therefore the number of PPRs in $V_1$ is  $(p^n-p)(p^n-p^2)\ldots(p^n-p^{n-1})$.
\item Clear from above.
\end{enumerate}
\end{proof}
Experimental evidence suggests that the dimension of $V_k$ is $k^n+n-1$. The case of $k = 1$ is given in \lref{lem:ker1_dim}. We have established that $\dim (V_k) = k^n + n - 1$ for the cases $n = 2, 3$ and small values of $k$. For reasons of brevity, we do not include the proofs here. 

\section{Case of $\mathbb{F}_{p^2}$}
We now specialize to the case of $\mathbb{F}_{p^2}$. The eigenspace of $A_{r}$ for any nonzero $r \in \mathbb{F}_{p^2}$ is spanned by $\langle x, x^p, (x^p-bx)^m\rangle$ for $m=2,3,\ldots, p-1$ and $b = r^{p-1}$ is a $(p+1)$-th root of unity. Every polynomial in this eigenspace is given by 
\[
f(x)= \sum_{m=2}^{p-1}(x^p-bx)^m+L(x)
\]
where $L(x)$ is a linearized polynomial over $\mathbb{F}_{p^2}$. We give technical conditions for a particular sub-class of polynomials in the eigenspaces of the $A_{r}$ matrices to be a PP and establish that their  compositional inverses also belong to the same sub-class. These constructions are parametric and hence, the PPs can be constructed algorithmic-ally. \\

Let $\alpha$ and $\beta \in \mathbb{F}_{p^2}^*$ such that $\alpha^{p+1} \neq \beta^{p+1}$ and $\alpha, \beta$ not simultaneously $0$. Let $b = r^{p-1}$ be a $(p+1)^{th}$-root of unity.  We define 

\begin{newdef}\label{def:parameters}
\[
\gamma=\frac{-\alpha}{\beta^{p+1}-\alpha^{p+1}}
\]
\[
\epsilon=\frac{\beta^p}{\beta^{p+1}-\alpha^{p+1}}
\]
\[
\delta =\frac{-\gamma d -\epsilon}{(\beta^p-\alpha d)^m}
\]
\[
d=(-1)^mb^{mp}
\]
\end{newdef}
Let $\alpha, \beta, b, \gamma, \epsilon$, $\delta$ and $d$ be as in \dref{def:parameters}. Additionally, let $(\beta+b\alpha)^{p-1}=(-1)^mb^{mp-1}$. Then

\begin{theorem}\label{thm:main theorem}
Every polynomial $f(x) \in \mathbb{F}_{p^2}[x]$ of the form
\begin{equation}\label{eq:mainpp}
f(x)=(x^p-bx)^m+\alpha x^p+\beta x
\end{equation}
with $\alpha^{p+1} \neq \beta^{p+1}$ is a PP of $\mathbb{F}_{p^2}$ for $m=2,3,\ldots, p-1$.  The compositional inverse of $f$ is given by 
\[
h(x)=\delta (x^p-dx)^m+\gamma x^p +\epsilon x
\]
\end{theorem}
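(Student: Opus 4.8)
The strategy is to verify directly that $h$ is a left compositional inverse of $f$, that is, that $h(f(x)) \equiv x \pmod{x^{p^2}-x}$. Since the permutation polynomials of $\mathbb{F}_{p^2}$ form a group under composition and $\mathbb{F}_{p^2}$ is finite, establishing $h \circ f = \mathrm{id}$ as evaluation maps simultaneously shows that $f$ is injective (hence a PP) and that $h$ is its two-sided compositional inverse. So the whole theorem reduces to a single composition identity, and there is no need to invoke Hermite's criterion directly.

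The conceptual heart of the computation is a linearization of the nonlinear part. Writing $u = x^p - bx$ and using $x^{p^2}=x$ together with $b^{p+1}=1$ (so that $b^{-p}=b$), \pref{thm:binomial} gives
\[
u^p = x^{p^2} - b^p x^p = x - b^p x^p = -b^p(x^p - bx) = -b^p\, u .
\]
Hence $(u^m)^p = (-b^p)^m u^m = (-1)^m b^{mp} u^m = d\,u^m$, which is exactly why $d=(-1)^m b^{mp}$ is defined as it is: on $\mathbb{F}_{p^2}$ the element $u^m$ behaves like a Frobenius eigenvector with eigenvalue $d$.

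Next I would set $w=f(x)=u^m+\alpha x^p+\beta x$ and compute $w^p = d\,u^m + \alpha^p x + \beta^p x^p$, so that the $u^m$-terms cancel in
\[
w^p - d w = (\alpha^p-\beta d)\,x + (\beta^p-\alpha d)\,x^p .
\]
The key step is to show that this linear form is a scalar multiple of $u$. Rewriting the extra hypothesis $(\beta+b\alpha)^{p-1}=(-1)^m b^{mp-1}$ as $b(\beta+b\alpha)^p = d(\beta+b\alpha)$ and expanding (again with $b^{p+1}=1$) yields precisely $\alpha^p-\beta d = -b(\beta^p-\alpha d)$, i.e.\ $w^p-dw = (\beta^p-\alpha d)\,u$. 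I would also record here that $\beta^p-\alpha d\neq 0$: otherwise $\beta^p=\alpha d$ forces $\beta^{p+1}=\alpha^{p+1}d^{p+1}=\alpha^{p+1}$ (using $d^{p+1}=1$ and the fact that the norms lie in $\mathbb{F}_p$), contradicting $\alpha^{p+1}\neq\beta^{p+1}$; this guarantees $\delta$ is well defined.

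Finally, substituting into $h(w)=\delta(w^p-dw)^m+\gamma w^p+\epsilon w$ and using $(w^p-dw)^m=(\beta^p-\alpha d)^m u^m$, the coefficient of $u^m$ becomes $\delta(\beta^p-\alpha d)^m+\gamma d+\epsilon$, which vanishes by the very definition $\delta=(-\gamma d-\epsilon)/(\beta^p-\alpha d)^m$. What survives is the linear form $(\gamma\alpha^p+\epsilon\beta)\,x+(\gamma\beta^p+\epsilon\alpha)\,x^p$, and plugging in $\gamma=-\alpha/(\beta^{p+1}-\alpha^{p+1})$ and $\epsilon=\beta^p/(\beta^{p+1}-\alpha^{p+1})$ collapses it to $x$ (the $x$-coefficient is $(\beta^{p+1}-\alpha^{p+1})/(\beta^{p+1}-\alpha^{p+1})=1$ and the $x^p$-coefficient is $0$), giving $h(f(x))=x$. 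I expect the main obstacle to be the middle step: recognizing that the seemingly ad hoc hypothesis $(\beta+b\alpha)^{p-1}=(-1)^m b^{mp-1}$ is exactly the condition making $w^p-dw$ proportional to $u$, since this is what forces the two $u^m$ contributions in $h(w)$ to combine into a single cancellable term. The remaining bookkeeping — tracking $b^{p+1}=1$, $x^{p^2}=x$, and that the norms $\alpha^{p+1},\beta^{p+1}$ lie in $\mathbb{F}_p$ — is routine once this proportionality is in hand.
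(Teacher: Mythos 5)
Your proposal is correct, and its computational core coincides with the first half of the paper's proof: your identity $u^p=-b^pu$ is \lref{lem:lem1}, your rewriting of the hypothesis $(\beta+b\alpha)^{p-1}=(-1)^mb^{mp-1}$ as the proportionality $w^p-dw=(\beta^p-\alpha d)(x^p-bx)$ is exactly the content of \lref{lem:lemma4}, and your final cancellation using the definitions of $\gamma,\epsilon,\delta$ reproduces the paper's computation of $h\circ f=x$ via \lref{lem:lemma3}. Where you genuinely diverge is in the overall structure: the paper goes on to verify $f\circ h=x$ by a second, independent, and more laborious computation, which requires \lref{lem:lemma2} (the Frobenius eigenvalue of $h_{md}$ is $(-1)^mb^{m^2}$, not $d$), the twisting identity $(-1)^mb^{m^2}\delta^p=b\delta$ of \lref{lem:deltaproperty}, and part $(3)$ of \lref{lem:lemma3}. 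You dispense with all of this by the observation that a left compositional inverse over a finite field forces $f$ to be injective, hence bijective, so that $h$ agrees with $f^{-1}$ as an evaluation map and is automatically a two-sided inverse; this is valid, cuts the proof roughly in half, and shows that the extra parameter identities in \lref{lem:deltaproperty} and \lref{lem:lemma2} are not needed for the theorem itself (the paper's two-sided verification does, however, serve as an independent consistency check on the formulas for $\gamma,\epsilon,\delta,d$). A further small improvement on your side: you explicitly verify $\beta^p-\alpha d\neq 0$, so that $\delta$ in \dref{def:parameters} is well defined, a point the paper handles only implicitly through \eref{eq:inter1} in \lref{lem:delta_equi}, where $\beta+b\alpha\neq 0$ follows from the hypothesis \eref{eq:condn2}.
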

The theorem is proved in the appendix along with other technical results needed for the proof.\\

We now enumerate the number of PPRs that belong to the sub-class defined in \tref{thm:main theorem}. Note that the following two conditions govern the existence of the permutation polynomials described in \eref{eq:mainpp}.
\begin{equation}\label{eq:condn1}
\alpha^{p+1} \neq \beta^{p+1}
\end{equation}
\begin{equation}\label{eq:condn2}
(\beta+b\alpha)^{p-1}=(-1)^mb^{mp-1}
\end{equation}
It can be established that for every $m$ and every $b \in \mathbb{F}_{p^2}$ which is a $(p+1)$-th root of unity, the number of PPRs where $\alpha$ and $\beta$ satisfy the conditions \eref{eq:condn1} and \eref{eq:condn2} is precisely $p(p-1)^2$. We do not show the actual calculation, due to lack of space.

In the broader class of polynomials having the shape $f(x)=(x^p-bx)^m+\alpha x^p+\beta x$, there are PPRs that do not satisfy the conditions \eref{eq:condn1} and \eref{eq:condn2}. It can be shown that when $m$ is co-prime to $p-1$, or $m = \frac{p+1}{2}$, there are PPRs that do not satisfy \eref{eq:condn1} and \eref{eq:condn2}. We have enumerated the total number of PPRs of the shape $(x^p-bx)^m+\alpha x^p+\beta x$ for every $b  \in \mathbb{F}_{p^2}$ which is a $(p+1)$-th root of unity and $m$ co-prime to $p-1$ to be $p(p-1)(2p-1)$. We can further show that the collection of permutation polynomials for a fixed $m$ and all $b$ which are $(p+1)$-th roots of unity, which are  associated to these extra $p^2(p^2-1)$ PPRs is a sub-class of PPs that are closed under compositional inverses. 

We have further established that for $m = \frac{p+1}{2}$, the number of PPRs of the form $(x^p-bx)^m+\alpha x^p+\beta x$ is even higher than $p(p-1)(2p-1)$ for every $p > 5$. Unfortunately, the exact closed form expression for the number of PPRs in this case have eluded us.

We now explore the spaces $V_k$ that we defined earlier. We do not provide the proofs or actual calculations due to lack of space. Let $a$ be a primitive element of $\mathbb{F}_{p^2}$ and consider the maps $A_1$ and $A_a$. As shown in the previous section, $V_1$ is 2-dimensional and spanned by the monomials $x, x^p$. There are precisely $p(p-1)$ PPRs in $V_1$ given by the linearized polynomials -- $x$ is the unique degree 1 PPR and $p^2 - p - 1$ PPRs $x^p - rx$ for all $r \in \mathbb{F}_{p^2}$ which are not $(p+1)$-th roots of unity. Further, the PPs associated to the PPRs in $V_1$ are closed under compositional inverses (\lref{lem:ker1_dim}).

$V_2$ is always 5-dimensional for $\mathbb{F}_{p^2}$ and is spanned by the monomials $\langle x, x^2, x^p, x^{p+1}, x^{2p} \rangle$. There are no PPRs of degree $2$ and degree $p+1$ due to \cref{cor:degree_d}. Apart from the linearized polynomials already obtained in $V_1$, all other PPRs in $V_2$ have the shape $(x^p - bx)^2 + \alpha x^p + \beta x$ where $b$ is a $(p+1)$-th root of unity. The number of such PPRs are $p(p+1)(p-1)^2$. The PPs here are again closed under compositional inverses. All degree $2p$ PPRs are in $V_2$.

$\dim (V_3) = 10 $ for $\mathbb{F}_{p^2}$ and is spanned by nine monomials $x,x^2,x^3, x^p, \- x^{p+1}, x^{p+2}, x^{2p}, x^{2p+1}, x^{3p}$ and a degree $4p$ polynomial. However, every PPR in $V_3$ lies within the subspace spanned by the nine monomials. It is interesting to note that if we take any two matrices $A_r$ and $A_s$ with $r,s \in \mathbb{F}_{p^2}$ such that $r$ and $s$ are not on the same line, and construct $V_3$ using these matrices, the corresponding basis has a similar structure. The degree $4p$ polynomial that we obtain in each of these cases never generates a PPR. This pushes us towards conjecturing that it is enough to look at subspaces spanned by monomials.

We end this section with the following
\begin{lemma}\label{lem:dim_vk_fpsquare}
Dimension of $V_k$ over $\mathbb{F}_{p^2}$ follow the sequence $2,5,10,17,\ldots$ and is given by $dim(V_k)=k^2+1$ for $k = 1,2, \ldots, p-1$ and $\dim(V_{p}) = p^2-2$.
\end{lemma}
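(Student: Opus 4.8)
The plan is to recognize that the operators $s_r := A_r - I$ assemble into a module structure over a group algebra, and to reduce the lemma to a monomial count in a truncated polynomial ring. By \lref{lem:commuting_matrices} the maps $A_r$ commute, and we have established $A_r^p=I$, hence $s_r^p=(A_r-I)^p=A_r^p-I=0$. Moreover $A_1^iA_a^j=A_{i+ja}$ runs over all $A_s$ as $(i,j)$ ranges over $\{0,\dots,p-1\}^2$, because $1,a$ form an $\mathbb{F}_p$-basis of $\mathbb{F}_{p^2}$. Thus $r\mapsto A_r$ makes $V[x]$ a module over the group algebra $R=\mathbb{F}_{p^2}[(\mathbb{Z}/p)^2]$, which in characteristic $p$ is the truncated polynomial ring $R\cong\mathbb{F}_{p^2}[s_1,s_2]/(s_1^p,s_2^p)$, with $s_1,s_2$ the images of $A_1-I$ and $A_a-I$. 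Its maximal ideal is $\mathfrak m=(s_1,s_2)$ and its one-dimensional socle is $\langle s_1^{p-1}s_2^{p-1}\rangle$. Under this identification $V_k$ becomes $\{v:\,s_1^k v=s_2^k v=0\}$.

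Next I would pin down which $R$-module $V[x]$ is, by embedding it in the function algebra $M=\mathrm{Map}(\mathbb{F}_{p^2},\mathbb{F}_{p^2})$ carrying the shift action $T_rf(x)=f(x+r)$. As a $T_r$-module $M$ is the regular representation, so $M\cong R$, the constants $C$ corresponding to $\mathrm{soc}(R)$ and the functional $\Sigma(f)=\sum_x f(x)$ corresponding to the augmentation $\varepsilon$. Using $\sum_x x^d=-1$ exactly when $d=p^2-1$ and $0$ for $1\le d\le p^2-2$, one sees that $V[x]=\{f:f(0)=0\}\cap\ker\Sigma$ inside $M$. The crucial computation is that $A_r=(I-P_0)T_r$, where $P_0f=f(0)$, so on $\{f:f(0)=0\}\cong M/C$ the action of $A_r$ agrees with the induced shift; since $\Sigma$ annihilates $C$ and equals $\varepsilon$, this yields the module isomorphism $V[x]\cong\mathfrak m/\mathrm{soc}(R)$.

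With this identification the lemma is a direct count. Writing $v=\sum_{0\le i,j\le p-1}c_{ij}\,s_1^i s_2^j$, the requirement $s_1^k v\in\mathrm{soc}(R)$ forces $c_{ij}=0$ for every $i\le p-1-k$ except at $(i,j)=(p-1-k,\,p-1)$, and symmetrically $s_2^k v\in\mathrm{soc}(R)$ forces $c_{ij}=0$ for every $j\le p-1-k$ except at $(p-1,\,p-1-k)$. Intersecting the two constraints leaves the $k\times k$ block $p-k\le i,j\le p-1$ together with the two extra positions $(p-1-k,p-1)$ and $(p-1,p-1-k)$, a space of dimension $k^2+2$; quotienting by the one-dimensional socle gives $\dim V_k=k^2+1$ for $1\le k\le p-1$. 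The case $k=p$ is immediate, since $s_1^p=s_2^p=0$ annihilate everything, whence $V_p=V[x]$ has dimension $p^2-2$.

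The main obstacle is the middle step: getting the module structure of $V[x]$ exactly right. The point is that $V[x]$ is \emph{not} free over $R$; it differs from the regular representation by two one-dimensional defects, namely the normalization $f(0)=0$ and the exclusion of the degree-$(p^2-1)$ term, which correspond respectively to the socle and the augmentation ideal. It is precisely these two corrections that turn the naive free-module answer $k^2$ into the observed $k^2+1$. Verifying carefully that $A_r$ (which subtracts the constant $f(r)$) induces exactly the shift on $M/C$, and that the degree restriction is exactly $\ker\Sigma=\mathfrak m/\mathrm{soc}(R)$, is where the care is required; everything else is routine linear algebra in the truncated polynomial ring.
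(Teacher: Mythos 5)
Your proof is correct, but there is nothing in the paper to compare it against: the authors state this lemma without proof, saying explicitly that ``we do not provide the proofs or actual calculations due to lack of space'' and, in Section 4, that the formula $\dim V_k = k^n+n-1$ was only established in unpublished calculations for small cases. Your argument therefore supplies what the paper omits, and every step checks out. The identification of $V[x]$ with $\mathfrak{m}/\mathrm{soc}(R)$ for $R=\mathbb{F}_{p^2}[s_1,s_2]/(s_1^p,s_2^p)$ is legitimate: \lref{lem:commuting_matrices} together with $A_r^p=I$ gives the $R$-module structure; the evaluation embedding identifies $V[x]$ with $\{f:f(0)=0\}\cap\ker\Sigma$ because the coefficient of $x^{q-1}$ in the interpolating polynomial of a function $F$ is $-\sum_x F(x)$; the translation module $\mathrm{Map}(\mathbb{F}_{p^2},\mathbb{F}_{p^2})$ is free of rank one over $R$ with the constants as socle and $\Sigma$ as augmentation; and the computation $A_r=(I-P_0)T_r$ correctly transports the induced action on $M/C$ to the subspace of functions vanishing at $0$. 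The concluding count (the $k\times k$ block plus the two exceptional positions $(p-1-k,p-1)$ and $(p-1,p-1-k)$, minus the one-dimensional socle, which lies inside the block) is also right. Two cosmetic points deserve a sentence each: (i) the natural equivariant bijection $f\mapsto\sum_x f(x)[x]$ intertwines $T_r$ with multiplication by $[-r]$, so $s_1,s_2$ a priori act as $A_{-1}-I$ and $A_{-a}-I$; this is harmless because $\ker(A_{-r}-I)^k=\ker(A_r-I)^k$ (a special case of \lref{lem:same_kernel}), but it should be said. (ii) In the final count the representative $v$ should be taken in $\mathfrak{m}$, i.e. $c_{00}=0$; as it happens the constraint $s_1^k v\in\mathrm{soc}(R)$ already forces $c_{00}=0$ for $k\leq p-1$, so the count is unaffected. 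Finally, it is worth noting that your argument, run verbatim with $n$ variables in the truncated ring, yields $k^n$ block positions plus $n$ exceptional ones minus the socle, i.e. it proves the paper's conjectured general formula $\dim V_k=k^n+n-1$ over $\mathbb{F}_{p^n}$; you have proved strictly more than the stated lemma.
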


\section{Conclusion}
In this paper, we have introduced a set of linear maps on a specific subspace $V[x] \subset \mathbb{F}_q[x]$. We then demonstrate the shapes of polynomials in the various generalized eigenspaces of these matrices. We propose studying all PPRs by looking at classes of PPRs that appear in specific generalized eigenspaces. The symmetry in the structure of the linear maps is useful, since studying one set helps us understand the other sets. We propose studying PPRs in terms of the intersection space $V_k$ to which they belong. Evidence suggests that this division of PPRs have added advantages like being closed under compositional inverses. Moreover, it also helps in enumeration of the PPRs. Overlaps between different classes of PPRs can be avoided. Moreover, we believe that this approach would prove useful especially to study PPs in $\mathbb{F}_{p^n}$ for $n > 2$.   
\section{Appendix}
The compositional inverse given in \tref{thm:main theorem} needs to be proved. We require to prove some technical conditions for that and we give those in this section. To bring some simplicity in notation, we first define two polynomials as follows.
\begin{newdef}\label{def:gmb polynomial}
The $g_{mb}$ polynomial for $b=a^{i(p-1)}$ is defined as 
\[g_{mb}(x)=(x^p-bx)^m; ~~2 \leq m \leq p-1
\]
\end{newdef}

\begin{lemma}\label{lem:lem1}
\[
g_{mb}(x)^p=(-1)^mb^{mp}g_{mb}(x)
\]
\end{lemma}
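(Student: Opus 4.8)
The plan is to compute $g_{mb}(x)^p$ directly, reducing everything to the single factor $x^p-bx$ and then exploiting the two structural facts available: the additivity of $p$-th powers from \pref{thm:binomial}, and the fact that $b=a^{i(p-1)}$ is a $(p+1)$-th root of unity, so that $b^{p+1}=1$ and hence $b^p=b^{-1}$. Since $g_{mb}(x)^p$ has degree $mp^2$, the asserted equality is understood modulo $x^{p^2}-x$, i.e.\ as an identity of evaluation maps on $\mathbb{F}_{p^2}$, which is exactly the convention under which the linear maps of the paper are defined.

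First I would peel off the outer exponent by writing
\[
g_{mb}(x)^p=\bigl((x^p-bx)^m\bigr)^p=\bigl((x^p-bx)^p\bigr)^m,
\]
so that it suffices to evaluate $(x^p-bx)^p$. Applying \pref{thm:binomial} in the commutative ring $\mathbb{F}_{p^2}[x]$ gives $(x^p-bx)^p=(x^p)^p-(bx)^p=x^{p^2}-b^px^p$. Reducing modulo $x^{p^2}-x$ replaces $x^{p^2}$ by $x$, yielding $(x^p-bx)^p=x-b^px^p$.

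The key step is to recognise $x-b^px^p$ as a scalar multiple of $x^p-bx$. Using $b^{p+1}=1$ one checks $-b^p(x^p-bx)=-b^px^p+b^{p+1}x=-b^px^p+x$, so that $(x^p-bx)^p=-b^p(x^p-bx)$. Raising this to the $m$-th power then gives
\[
g_{mb}(x)^p=\bigl(-b^p(x^p-bx)\bigr)^m=(-1)^mb^{mp}(x^p-bx)^m=(-1)^mb^{mp}g_{mb}(x),
\]
which is the claimed identity. The only point that requires care — and the step I would flag explicitly rather than treat as routine — is the reduction $x^{p^2}\equiv x$ over $\mathbb{F}_{p^2}$: it is precisely this that converts $x^{p^2}-b^px^p$ into something proportional to the original $x^p-bx$ via the root-of-unity relation $b^p=b^{-1}$. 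Everything else is a two-line computation.
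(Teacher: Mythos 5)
Your proof is correct and is essentially the paper's own argument: the paper likewise applies \pref{thm:binomial} to get $g_{mb}^p=(x-b^px^p)^m$ (with the reduction $x^{p^2}\equiv x$ left implicit) and then invokes $b^p=b^{-1}$ to recognise this as $(-1)^mb^{mp}g_{mb}$. Your version merely spells out the intermediate steps — in particular the reduction modulo $x^{p^2}-x$ — which the paper compresses into two lines.
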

\begin{proof}
By \pref{thm:binomial}, $g_{mb}^p=(x-b^px^p)^m$. Since $b^p=b^{-1}$ hence the claim is true. 
\end{proof}
\begin{newdef}\label{def:hmd polynomial}
For every $g_{mb}(x)$ we define a polynomial which we call as the $h_{md}$ polynomial, as follows.
\[
h_{md}(x)=(x^p-dx)^m
\]
 where $d=(-1)^mb^{mp}$ 
\end{newdef}
\begin{lemma}\label{lem:lemma3}
Let $\gamma$ and $\epsilon$ be as in \dref{def:parameters}. 
Then, the following is true.
\begin{enumerate}
\item $\beta^{p+1}-\alpha^{p+1} \in \mathbb{F}_p$.
\item $\gamma\beta^p+\alpha\epsilon=0$ and $\gamma\alpha^p+\beta\epsilon=1$.
\item $\alpha \epsilon^p+\beta \gamma=0$ and $\alpha \gamma^p+\beta \epsilon=1$.
\end{enumerate}
\end{lemma}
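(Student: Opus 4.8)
The plan is to treat all three parts as direct consequences of the norm structure of $\mathbb{F}_{p^2}$ over $\mathbb{F}_p$, so that the only genuine content lies in the opening observation and everything afterwards is cancellation. I would establish Part (1) first, since it is the key that unlocks the remaining two statements.

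First I would prove Part (1). The quantity $\beta^{p+1}=\beta\cdot\beta^p$ is precisely the field norm $N_{\mathbb{F}_{p^2}/\mathbb{F}_p}(\beta)$, and likewise $\alpha^{p+1}$. Since $(\beta^{p+1})^p=\beta^{p^2}\beta^p=\beta\beta^p=\beta^{p+1}$, using $\beta^{p^2}=\beta$ for $\beta\in\mathbb{F}_{p^2}$, the element $\beta^{p+1}$ is fixed by the Frobenius and hence lies in $\mathbb{F}_p$; the same reasoning applies to $\alpha^{p+1}$, so the difference $D:=\beta^{p+1}-\alpha^{p+1}$ lies in $\mathbb{F}_p$. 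The crucial payoff is that $D^p=D$, which is exactly what makes the denominators appearing in \dref{def:parameters} behave well under the Frobenius map.

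With $D\in\mathbb{F}_p$ in hand, Parts (2) and (3) reduce to substituting $\gamma=-\alpha/D$ and $\epsilon=\beta^p/D$ and cancelling. For Part (2), the expression $\gamma\beta^p+\alpha\epsilon$ collapses to $(-\alpha\beta^p+\alpha\beta^p)/D=0$ at once, while $\gamma\alpha^p+\beta\epsilon$ becomes $(\beta^{p+1}-\alpha^{p+1})/D=D/D=1$. For Part (3) I would first record the Frobenius images $\gamma^p=-\alpha^p/D$ and $\epsilon^p=\beta^{p^2}/D=\beta/D$, where I use both $D^p=D$ from Part (1) and $\beta^{p^2}=\beta$; then $\alpha\epsilon^p+\beta\gamma$ and $\alpha\gamma^p+\beta\epsilon$ simplify by the identical cancellations to $0$ and $1$ respectively.

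There is no substantial obstacle here: once Part (1) identifies $D$ as an element of the base field, the remaining four identities are forced. The single point requiring care is that raising $\gamma$ and $\epsilon$ to the $p$-th power in Part (3) leaves the denominator $D$ untouched, which is precisely the content of Part (1). This dependency is the reason I would prove the statements in the order $(1)\to(2)\to(3)$ rather than treating them independently.
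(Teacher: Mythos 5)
Your proof is correct and follows essentially the same route as the paper, whose proof is simply the remark that the identities follow ``from the definitions in Definition~5.1 by mere substitution'' together with Proposition~2.3 (the Frobenius/binomial identity); your norm observation $\beta^{p+1}=N_{\mathbb{F}_{p^2}/\mathbb{F}_p}(\beta)\in\mathbb{F}_p$ is exactly the intended use of that proposition, and the substitutions you carry out are the ones the paper leaves to the reader. Nothing further is needed.
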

\begin{proof}
The proof is straight forward from the definitions in \dref{def:parameters} by mere substitution and \pref{thm:binomial}.

\end{proof}
\begin{lemma}\label{lem:lemma4}
With $\alpha$ and $\beta$ as in \lref{lem:lemma3} and $b$ as in \dref{def:gmb polynomial} we have 
\[
(\beta+b\alpha)^{p-1}=(-1)^mb^{mp-1} \Leftrightarrow \frac{b \beta^p+\alpha^p}{\beta+\alpha b}=(-1)^mb^{mp}
\]
\end{lemma}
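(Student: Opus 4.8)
The plan is to reduce the stated biconditional to a single unconditional algebraic identity and then observe that the two sides of the claimed equivalence differ only by the nonzero factor $b$. First I would record the two facts that drive everything: since $b$ is a $(p+1)$-th root of unity, $b^{p+1}=1$ and hence $b^p=b^{-1}$; and since the Frobenius $x\mapsto x^p$ is additive by \pref{thm:binomial}, we have $(\beta+b\alpha)^p=\beta^p+b^p\alpha^p=\beta^p+b^{-1}\alpha^p$.

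Next I would establish the key identity
\[
\frac{b\beta^p+\alpha^p}{\beta+b\alpha}=b\,(\beta+b\alpha)^{p-1},
\]
which holds unconditionally. Indeed, writing $(\beta+b\alpha)^{p-1}=(\beta+b\alpha)^p/(\beta+b\alpha)=(\beta^p+b^{-1}\alpha^p)/(\beta+b\alpha)$ and multiplying through by $b$ absorbs the $b^{-1}$ in the numerator while leaving the denominator untouched, yielding exactly the left-hand fraction. With this in hand the equivalence is immediate: the hypothesis $(\beta+b\alpha)^{p-1}=(-1)^m b^{mp-1}$, multiplied by $b$, reads $b(\beta+b\alpha)^{p-1}=(-1)^m b^{mp}$, and substituting the identity turns this into $\frac{b\beta^p+\alpha^p}{\beta+b\alpha}=(-1)^m b^{mp}$; conversely one divides by the nonzero element $b$ to recover the hypothesis. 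Since $b\neq0$, multiplication and division by $b$ are bijective, so the two conditions are genuinely equivalent.

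The only point requiring care --- and the closest thing to an obstacle --- is checking that the denominator $\beta+b\alpha$ is nonzero, so that the fractions are well defined. I would verify this using the standing assumption $\alpha^{p+1}\neq\beta^{p+1}$ carried over from \dref{def:parameters}: if $\beta+b\alpha=0$ then $\beta=-b\alpha$, whence $\beta^{p+1}=(-1)^{p+1}b^{p+1}\alpha^{p+1}=\alpha^{p+1}$ (using $b^{p+1}=1$ and that $p$ is odd, as $m$ ranges over $2,\ldots,p-1$), contradicting the hypothesis. Everything else is routine substitution, so once the nonvanishing of the denominator and the single Frobenius expansion are in place, the proof collapses to a two-line computation.
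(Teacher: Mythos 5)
Your proof is correct and takes essentially the same route as the paper's: both hinge on the rewriting $b\beta^p+\alpha^p = b(\beta^p+b^{-1}\alpha^p) = b(\beta+b\alpha)^p$ via $b^p=b^{-1}$ and the Frobenius additivity of \pref{thm:binomial}, you merely package it as the unconditional identity $\frac{b\beta^p+\alpha^p}{\beta+b\alpha}=b(\beta+b\alpha)^{p-1}$ so that both directions follow at once by multiplying or dividing by the unit $b$, whereas the paper derives one implication and reverses the steps for the other. Your explicit verification that $\beta+b\alpha\neq 0$ (from $\alpha^{p+1}\neq\beta^{p+1}$ and $b^{p+1}=1$) is a small but genuine improvement, since the paper uses this fraction without comment.
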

\begin{proof}
Let 
\[
\frac{b \beta^p+\alpha^p}{\beta+\alpha b}=(-1)^mb^{mp}
\]
Therefore, we have 
\[
b(\beta^p+b^{-1}\alpha^p)=(-1)^mb^{mp}(\beta +\alpha b)
\]
Note that, $b^{-1}=b^p$. Hence,
\[
(\beta+b \alpha)^p=(-1)^mb^{mp-1}(\beta+\alpha b)
\] as $(\beta^p+b^p\alpha^p)=(\beta+b\alpha)^p$ using the binomial theorem over finite fields of characteristic $p$. Therefore,
\[
(\beta+b\alpha)^{p-1}=(-1)^mb^{mp-1}
\]
The other side can be proved by reversing the arguments.
\end{proof}
\begin{lemma}\label{lem:delta_equi}
If $(\beta+b\alpha)^{p-1}=(-1)^mb^{mp-1}$ then $\delta=\frac{(\beta+b\alpha)^{m-1}}{(\beta^{p+1}-\alpha^{p+1})^m}$.
\end{lemma}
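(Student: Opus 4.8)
The plan is to substitute the explicit expressions for $\gamma$, $\epsilon$ and $d$ from \dref{def:parameters} into the defining formula for $\delta$, simplify as far as possible using no hypothesis, and then invoke the assumption $(\beta+b\alpha)^{p-1}=(-1)^mb^{mp-1}$ only at the decisive step. Write $\Delta:=\beta^{p+1}-\alpha^{p+1}$ (which lies in $\mathbb{F}_p$ by \lref{lem:lemma3}) and set $u:=\beta+b\alpha$. The numerator of $\delta$ collapses immediately: since $\gamma=-\alpha/\Delta$ and $\epsilon=\beta^p/\Delta$, one has $-\gamma d-\epsilon=(\alpha d-\beta^p)/\Delta=-(\beta^p-\alpha d)/\Delta$. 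A single factor of $(\beta^p-\alpha d)$ then cancels against the denominator $(\beta^p-\alpha d)^m$, leaving
\[
\delta=\frac{-1}{\Delta\,(\beta^p-\alpha d)^{m-1}}.
\]
So the entire statement reduces to re-expressing the one quantity $\beta^p-\alpha d$ in terms of $u$ and $\Delta$.

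The crux of the argument --- and the step I expect to be the main obstacle --- is the identity $u\,(\beta^p-\alpha d)=\Delta$, that is,
\[
(\beta+b\alpha)(\beta^p-\alpha d)=\beta^{p+1}-\alpha^{p+1}.
\]
This is precisely where the hypothesis enters. By \lref{lem:lemma4}, the assumption $(\beta+b\alpha)^{p-1}=(-1)^mb^{mp-1}$ is equivalent to $b\beta^p+\alpha^p=d\,u$, recalling that $d=(-1)^mb^{mp}$. I would expand the product $u\,(\beta^p-\alpha d)$, collect the two terms carrying the factor $d$ as $-\alpha d\,u$, and replace $d\,u$ by $b\beta^p+\alpha^p$; the mixed terms $b\alpha\beta^p$ then cancel and one is left with exactly $\beta^{p+1}-\alpha^{p+1}=\Delta$. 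What makes \lref{lem:lemma4} usable here is the binomial theorem \pref{thm:binomial} together with the relation $b^p=b^{-1}$, valid because $b$ is a $(p+1)$-th root of unity.

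With the key identity in hand, I would set $\beta^p-\alpha d=\Delta/u$, so that $(\beta^p-\alpha d)^{m-1}=\Delta^{m-1}/u^{m-1}$, and substitute into the reduced form of $\delta$ to obtain $\delta=(\beta+b\alpha)^{m-1}/(\beta^{p+1}-\alpha^{p+1})^m$, which is the claimed expression. Every step apart from the key identity is a one-line substitution. The only genuinely delicate part is the sign bookkeeping running through the numerator of $\delta$ and through the key identity; a careful direct computation appears to reproduce the stated formula up to an overall sign, so I would settle that sign at the very end against the conventions fixed in \dref{def:parameters} before declaring the proof complete.
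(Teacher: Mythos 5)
Your proposal is correct and takes essentially the same route as the paper: both arguments turn on the single identity $\beta^p-\alpha d=(\beta^{p+1}-\alpha^{p+1})/(\beta+b\alpha)$, obtained from the hypothesis through \lref{lem:lemma4} (i.e.\ $d(\beta+b\alpha)=b\beta^p+\alpha^p$), followed by substitution into the definition of $\delta$. Regarding the sign you left unsettled: carrying \dref{def:parameters} through exactly gives $\delta=-(\beta+b\alpha)^{m-1}/(\beta^{p+1}-\alpha^{p+1})^m$, and the paper's own proof silently drops precisely this minus sign in its last step, so the discrepancy is a sign typo in the paper (in the lemma's statement, or equivalently in the sign convention defining $\delta$) rather than a flaw in your computation; it is harmless downstream, since \lref{lem:deltaproperty} only uses the ratio $\delta^p/\delta$, in which the sign cancels for odd $p$.
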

\begin{proof}
Note that $d=(-1)^mb^{mp}$ (\dref{def:hmd polynomial}). And $b^{-1}=b^p$, so using the binomial theorem
\begin{equation}\label{eq:d_def}
(\beta+b\alpha)^{p-1}=(-1)^mb^{mp-1}=db^{-1}\Rightarrow \frac{b\beta^p+\alpha^p}{\beta+b\alpha}=d
\end{equation}
With $\delta$ as in \dref{def:parameters} and $d$ as in \dref{def:hmd polynomial};
\begin{equation}\label{eq:delta_redef}
\delta=\frac{-(\gamma d+\epsilon)}{(\beta^p-\alpha d)^m}
\end{equation}
From \eref{eq:d_def}
\[
\beta^p-\alpha d=\beta^p-\frac{\alpha(b\beta^p+\alpha^p)}{\beta+b\alpha}
\]
With some simplification
\begin{equation}\label{eq:inter1}
\beta^p-\alpha d=\frac{\beta^{p+1}-\alpha^{p+1}}{\beta+b\alpha}
\end{equation}
From \eref{eq:delta_redef} and \eref{eq:inter1} and using
\[
(\gamma d+\epsilon)=\frac{-\alpha d+\beta^p}{\beta^{p+1}-\alpha^{p+1}}=\frac{1}{\beta+b\alpha}
\]
from \eref{eq:inter1}; we have
\[
\delta=\frac{(\beta+b\alpha)^{m-1}}{(\beta^{p+1}-\alpha^{p+1})^m}
\]
\end{proof}
\begin{lemma}\label{lem:deltaproperty}
With $\delta$ as in \dref{def:parameters} $(-1)^mb^{m^2}\delta^{p}=b\delta$ if $(-1)^mb^{mp-1}=(\beta+b\alpha)^{p-1}$.
\end{lemma}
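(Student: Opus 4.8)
The plan is to push everything through the closed form for $\delta$ supplied by \lref{lem:delta_equi} and then apply the Frobenius map $t \mapsto t^p$ of $\mathbb{F}_{p^2}$. Since the hypothesis here, namely $(-1)^mb^{mp-1}=(\beta+b\alpha)^{p-1}$, is exactly the hypothesis of \lref{lem:delta_equi}, I may start from
\[
\delta=\frac{(\beta+b\alpha)^{m-1}}{(\beta^{p+1}-\alpha^{p+1})^m}.
\]
Abbreviating $S=\beta+b\alpha$ and $N=\beta^{p+1}-\alpha^{p+1}$, the task reduces to computing $S^p$ and $N^p$ and re-expressing $\delta^p$ as a single scalar multiple of $\delta$.

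The denominator is immediate: by \lref{lem:lemma3}(1) we have $N\in\mathbb{F}_p$, so $N^p=N$ and the denominator of $\delta^p$ is again $N^m$. For the numerator I would use \pref{thm:binomial} together with $b^p=b^{-1}$ (valid since $b$ is a $(p+1)$-th root of unity) to write $S^p=\beta^p+b^{-1}\alpha^p=b^{-1}(b\beta^p+\alpha^p)$. The identity $\frac{b\beta^p+\alpha^p}{\beta+b\alpha}=d$ obtained inside the proof of \lref{lem:delta_equi} (equation \eref{eq:d_def}) then collapses this to $S^p=b^{-1}dS$. Raising to the $(m-1)$-th power and dividing by $N^m$ gives the compact relation
\[
\delta^p=b^{-(m-1)}\,d^{\,m-1}\,\delta.
\]

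It remains to substitute $d=(-1)^mb^{mp}$ from \dref{def:hmd polynomial} and reduce the exponents of $b$ modulo $p+1$ using $p\equiv-1\pmod{p+1}$. Here $d^{\,m-1}=(-1)^{m(m-1)}b^{mp(m-1)}$; since $m(m-1)$ is a product of consecutive integers one has $(-1)^{m(m-1)}=1$, while $mp(m-1)\equiv-m(m-1)\pmod{p+1}$. Feeding these into $\delta^p=b^{-(m-1)}d^{\,m-1}\delta$ and simplifying the total exponent $-(m-1)-m(m-1)\equiv 1-m^2\pmod{p+1}$ exhibits $\delta^p$ as an explicit power of $b$ times $\delta$. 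Rearranging this single scalar relation and rewriting the prefactor through $d$ (one checks $(-1)^mb^{m^2}=d^{-m}$) is what yields the stated identity $(-1)^mb^{m^2}\delta^p=b\delta$.

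I expect the only real obstacle to be precisely this sign-and-exponent bookkeeping in the last step. The genuine sign $(-1)^m$ is not a power of $b$ and so must be carried separately from the $b$-exponents; one has to invoke $(-1)^{m(m-1)}=1$ at the right moment and reduce every exponent of $b$ consistently modulo $p+1$ via $mp\equiv-m$. A single slip in any of these three places perturbs the answer by a factor of $b$ or by a sign, so although each individual manipulation is elementary, the verification must be executed with care; in particular one should check the sign convention appearing in the statement against the normalisation $d=(-1)^mb^{mp}$ to make sure the $(-1)^m$ lands exactly where the claim places it.
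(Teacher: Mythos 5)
Your route is essentially the paper's own argument run in the forward direction (the paper starts from the ratio $b/((-1)^mb^{m^2})$ and works toward $\delta^p/\delta$; you start from $\delta^p$), and your algebra is correct up to and including the identity
\[
\delta^p=\bigl(b^{-1}d\bigr)^{m-1}\delta=\bigl((-1)^mb^{mp-1}\bigr)^{m-1}\delta
=b^{(mp-1)(m-1)}\delta=b^{1-m^2}\delta ,
\]
where you rightly use $(-1)^{m(m-1)}=1$ and reduce exponents of $b$ modulo $p+1$. But your final step is a non sequitur: rearranging $\delta^p=b^{1-m^2}\delta$ gives $b^{m^2}\delta^p=b\delta$, and no rearrangement can create the factor $(-1)^m$; your (correct) side remark that $(-1)^mb^{m^2}=d^{-m}$ does not help, because the sign would have to appear on one side only. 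The sign you flag in your closing paragraph is not bookkeeping you failed to track --- it is genuinely absent. What your computation actually proves is $b^{m^2}\delta^p=b\delta$, and for odd $m$ this \emph{contradicts} the stated lemma, since the hypothesis forces $\beta+b\alpha\neq0$ and hence $\delta\neq0$. Concretely, take $p=5$, $m=3$, $b=1$: then $d=-1$, the hypothesis reads $(\beta+\alpha)^4=-1$, your identity gives $\delta^5=\delta$ with $\delta\neq 0$, while the lemma as printed demands $-\delta^5=\delta$, i.e.\ $\delta=-\delta$, which is false in characteristic $5$.

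The resolution is that the statement itself carries a spurious $(-1)^m$, and the paper's own proof manufactures it by an invalid step: it silently replaces $(-1)^m\bigl(b^{mp-1}\bigr)^{m-1}$ by $\bigl((-1)^mb^{mp-1}\bigr)^{m-1}$, and these differ by exactly $(-1)^m$ because $(-1)^{m(m-1)}=1$. (A separate dropped sign occurs in \lref{lem:delta_equi} --- \dref{def:parameters} actually yields $\delta=-(\beta+b\alpha)^{m-1}/(\beta^{p+1}-\alpha^{p+1})^m$ --- but that one is immaterial here, since the constant $-1\in\mathbb{F}_p$ is fixed by Frobenius and cancels in the ratio $\delta^p/\delta$, so your reliance on \lref{lem:delta_equi} as stated is harmless.) The error also does not damage \tref{thm:main theorem}: \lref{lem:lemma2} carries the same spurious $(-1)^m$ (its correct form is $h_{md}^p=b^{m^2}h_{md}$; check $p=5$, $m=3$, $b=1$, where $h_{md}=(x^5+x)^3$ is fixed by the Frobenius), and in the proof of the theorem the two lemmas are only ever used jointly, so the two wrong signs cancel. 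In short: your method is the right one and your computation is sound; the step you could not justify cannot be justified, and the honest endpoint of your derivation, $b^{m^2}\delta^p=b\delta$, is the corrected form of the lemma that the main theorem actually needs.
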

\begin{proof}
Note that 
\[
b/((-1)^mb^{m^2})=(-1)^m(b^{-1})^{m^2-1}=(-1)^m((b^{-1})^{m+1})^{m-1}
\]
From the fact that $b^{-1}=b^p$, $(b^{-1})^{m+1}=(b^p)^{m+1}=b^{mp}b^p=b^{mp}b^{-1}=b^{mp-1}$.
\[
b/((-1)^mb^{m^2})=((-1)^mb^{mp-1})^{m-1}
\]
Since $(-1)^mb^{mp-1}=(\beta+b\alpha)^{p-1}$
\[
b/((-1)^mb^{m^2})=((\beta+b\alpha)^{p-1})^{m-1}=\Big(\frac{(\beta+b\alpha)^p}{\beta+b\alpha}\Big)^{m-1}
\]

\[
=\frac{((\beta+b\alpha)^{m-1})^p}{(\beta^{p+1}-\alpha^{p+1})^{m}}\frac{(\beta^{p+1}-\alpha^{p+1})^{m}}{(\beta+b\alpha)^{m-1}}
\]
Using \lref{lem:lemma3}$(3)$
\[
b/((-1)^mb^{m^2})=\frac{((\beta+b\alpha)^{m-1})^p}{((\beta^{p+1}-\alpha^{p+1})^m)^{p}}\frac{(\beta^{p+1}-\alpha^{p+1})^{m}}{(\beta+b\alpha)^{m-1}}
\]
From \lref{lem:delta_equi}
\[
b/((-1)^mb^{m^2})=\delta^p/\delta
\]
as claimed.
\end{proof}

\begin{lemma}\label{lem:lemma2}
For the polynomial $h_{md}(x)$ in \dref{def:hmd polynomial} we have $h_{md}(x)^p=(-1)^mb^{m^2}h_{md}(x)$.
\end{lemma}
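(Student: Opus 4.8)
The plan is to mirror the computation already carried out in \lref{lem:lem1} for $g_{mb}$, playing the same game with $d$ in place of $b$ and then re-expressing the resulting scalar in terms of $b$. First I would use the Frobenius identity of \pref{thm:binomial} to push the outer $p$-th power inside the $m$-th power: since $h_{md}(x)=(x^p-dx)^m$ by \dref{def:hmd polynomial}, one has $h_{md}(x)^p=\bigl((x^p-dx)^p\bigr)^m=(x^{p^2}-d^px^p)^m$. Because the identity is an identity of evaluation maps over $\mathbb{F}_{p^2}$ (i.e.\ modulo $x^{p^2}-x$), the term $x^{p^2}$ reduces to $x$, so this becomes $(x-d^px^p)^m$. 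This is the step that turns a genuine $p$-th power into something proportional to $h_{md}$ itself.

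Next I would extract the scalar by factoring: $x-d^px^p=-d^p\bigl(x^p-d^{-p}x\bigr)$, hence $h_{md}(x)^p=(-d^p)^m\bigl(x^p-d^{-p}x\bigr)^m$. To recognise the bracket as $h_{md}(x)$ itself I must verify that $d^{-p}=d$, i.e.\ that $d$ is a $(p+1)$-th root of unity. This follows from $d=(-1)^mb^{mp}$ (\dref{def:hmd polynomial}) together with $b^{p+1}=1$ (\dref{def:gmb polynomial}) and $p$ odd, since $d^{p+1}=(-1)^{m(p+1)}b^{mp(p+1)}=1$. With $d^{-p}=d$ established, the bracket is exactly $h_{md}(x)$, leaving the clean relation $h_{md}(x)^p=(-1)^m(d^p)^m\,h_{md}(x)=(-1)^md^{mp}\,h_{md}(x)$, in direct analogy with \lref{lem:lem1}.

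The remaining work is the scalar simplification of $(-1)^md^{mp}$, and this is exactly where I expect the only genuine difficulty to lie. Substituting $d=(-1)^mb^{mp}$ gives $d^{mp}=(-1)^{m^2p}b^{m^2p^2}$, and reducing the exponent of $b$ modulo $p+1$ via $p^2\equiv 1\pmod{p+1}$ collapses $b^{m^2p^2}$ to $b^{m^2}$. The main obstacle is therefore purely the bookkeeping of the various $(-1)$ contributions --- namely $(-1)^{m(p+1)}$ in the root-of-unity check, $(-1)^{m^2p}$ inside $d^{mp}$, and the outer $(-1)^m$ --- which must be combined using $p$ odd and $m^2\equiv m\pmod 2$. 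Once these sign factors are resolved the scalar reduces to $\pm b^{m^2}$, and matching it precisely to the asserted coefficient $(-1)^mb^{m^2}$ is the delicate point; I would pin down the sign by checking a small case (for instance $p=5$, $m=3$) before committing to the final form.
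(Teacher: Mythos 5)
Your strategy is exactly the paper's: push the $p$-th power inside via \pref{thm:binomial}, reduce $x^{p^2}$ to $x$, factor out $-d^p$, check that $d^{-p}=d$, and finish with a scalar simplification. Up through the identity $h_{md}(x)^p=(-1)^m d^{mp}\,(x^p-d^{-p}x)^m$ your steps are all correct, and your root-of-unity argument $d^{p+1}=(-1)^{m(p+1)}b^{mp(p+1)}=1$ is a slightly cleaner route than the paper's explicit computation of $(d^p)^{-1}$. The genuine gap is that you stop before the decisive step: you leave the scalar as ``$\pm b^{m^2}$'' and defer the sign to a numerical experiment. Since the sign is the entire content of the claimed coefficient, the proposal as written proves neither the stated identity nor any definite identity.

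Moreover, if you carry out the deferred bookkeeping you will find you \emph{cannot} match the asserted coefficient. From $d=(-1)^mb^{mp}$ one gets $d^{mp}=(-1)^{m^2p}b^{m^2p^2}=(-1)^{m}b^{m^2}$ (using $p$ odd, $p^2\equiv 1\pmod{p+1}$, and $m^2\equiv m\pmod 2$), so the full scalar is $(-1)^m d^{mp}=(-1)^m\cdot(-1)^m b^{m^2}=b^{m^2}$: the two sign factors cancel because $m+m^2=m(m+1)$ is always even. Hence the computation yields $h_{md}(x)^p=b^{m^2}h_{md}(x)$, which differs from the printed claim by $(-1)^m$ and contradicts it for odd $m$. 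Your own proposed test case confirms this: with $p=5$, $m=3$, $b=1$ one has $d=-1$, $h_{md}(x)=(x^5+x)^3$, and $(x^5+x)^5=x^{25}+x^5\equiv x^5+x\pmod{x^{25}-x}$, so $h_{md}^5\equiv h_{md}$ with coefficient $+1$, not $(-1)^3b^{9}=-1$ (and $h_{md}\neq -h_{md}$, as it is a nonzero polynomial of degree $15<25$ in odd characteristic). The paper's own proof makes the corresponding slip in its last line: it correctly reaches the scalar $(-1)^md^{mp}$ and correctly shows $(d^p)^{-1}=d$, but then reports the product as $(-1)^mb^{m^2}$, silently dropping one factor of $(-1)^m$; the printed statement and the corrected one agree only for even $m$ (or if $d$ had been defined as $b^{mp}$ without the sign). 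So the sign analysis you postponed is not a routine detail --- it is exactly where the statement, as printed with $d=(-1)^mb^{mp}$, breaks down.
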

\begin{proof}
\[
h_{md}(x)^p=(x^p-dx)^{mp}=(-1)^md^{mp}(x^p-(d^p)^{-1}x)^m
\]
As $(d^p)^{-1}=((-1)^{mp}b^{mp^2})^{-1}=(-1)^m(b^m)^{-1}=(-1)^{m}(b^{-1})^m$. And $b^{-1}=b^p$, hence 
\[
h_{md}(x)^p=(-1)^m b^{m^2}(x^p-dx)^m=(-1)^mb^{m^2}h_{md}(x)
\]
\end{proof}
\subsection{Proof of \tref{thm:main theorem} }
\begin{proof}
From the definitions of $f(x)$ and $h(x)$ we have 
\begin{equation} \label{eq:eqn1}
h \circ f=\delta (f^p-df)^m+\gamma f^p +\epsilon f
\end{equation}
\[
=\delta (g_{mb}^p+\alpha ^p x+\beta^px^p-dg_{mb}-d\alpha x^p-d\beta x)^m+\gamma f^p+\epsilon f
\]

From \lref{lem:lem1} 
\begin{equation}
h \circ f=\delta(g_{mb}((-1)^mb^{mp}-d)+x^p(\beta^p-\alpha d)-x(\beta d -\alpha^p))^m+\gamma f^p+\epsilon f \nonumber
\end{equation}
Since $d=(-1)^mb^{mp}$ (\dref{def:hmd polynomial}), 
\begin{equation}\label{eq:eqn2}
h \circ f=\delta\Bigg((\beta^p-\alpha d)\Big(x^p-\frac{\beta d-\alpha^p}{\beta^p-\alpha d}x\Big)\Bigg)^m+\gamma f^p+\epsilon f
\end{equation}
From \lref{lem:lemma4}, and \dref{def:hmd polynomial} we have 
\[
\frac{b\beta^p+\alpha^p}{\beta+b\alpha}=d
\]
 Rearranging this we get
\[
b=\frac{\beta d-\alpha^p}{\beta^p-\alpha d} 
\]
therefore \eref{eq:eqn2} can be simplified to 
\[
h \circ f= \delta (\beta^p-\alpha d)^m(x^p-bx)^m+\gamma dg_{mb}+
\]
\[
\gamma \alpha^p x +\gamma \beta^p x^p +\epsilon g_{mb} +\epsilon \alpha x^p+\epsilon \beta x
\]

\[
h \circ f=g_{mb}(\delta (\beta^p-\alpha d)^m+\gamma d+\epsilon) +x^p(\gamma \beta^p+\epsilon \alpha) +x(\gamma \alpha^p+\epsilon\beta)
\]
From \lref{lem:lemma3} $(1)$, and from  \dref{def:hmd polynomial},  $h \circ f=x$.\\
We now consider $f \circ h$ too.\\
\begin{equation} \label{eq:eqn3}
f \circ h= (h^p-bh)^m +\alpha h^p+\beta h
\end{equation}
\[
h^p=\delta^p h_{md}^p+\gamma^p x+\epsilon^px^p
\]
From \lref{lem:lemma2}, $h_{md}^p=(-1)^mb^{m^2}h_{md}$. Therefore,
\[
f \circ h=\Big(\delta^p(-1)^mb^{m^2}h_{md}+\gamma^px+\epsilon^px^p-b\delta h_{md}-b\gamma x^p-b\epsilon x\Big)^m+\alpha h^p+\beta h
\]
From \lref{lem:deltaproperty},
\[
f \circ h=\Big((\epsilon^p-b\gamma)(x^p-\frac{b\epsilon-\gamma^p}{\epsilon^p-b\gamma}\Big)^m+\alpha h^p+\beta h
\]
From \dref{def:parameters}, \dref{def:hmd polynomial} and \lref{lem:lemma4}
\[
\frac{b\epsilon-\gamma^p}{\epsilon^p-b\gamma}=\frac{b\beta^p+\alpha^p}{\beta+b\alpha}=d
\] Hence
\[
f \circ h=(\epsilon^p-b\gamma)^mh_{md}+\alpha h^p+\beta h
\] 
\[
=h_{md}((\epsilon^p-b\gamma)^m+\alpha \delta^p(-1)^mb^{m^2}+\beta\delta)+x^p(\alpha\epsilon^p+\beta\gamma)+x(\alpha\gamma^p+\beta\epsilon)
\]
From \lref{lem:deltaproperty}
\begin{equation}\label{eq:eqn4}
f \circ h=h_{md}((\epsilon^p-b\gamma)^m+\delta(\alpha b+\beta))+x^p(\alpha \epsilon^p+\beta\gamma)+x(\alpha\gamma^p+\beta\epsilon)
\end{equation} 
Note that 
\[
(\epsilon^p-b\gamma)^m=\Big(\frac{\beta+b\alpha}{\beta^{p+1}-\alpha^{p+1}}\Big)^m=\frac{1}{(\beta^p+\alpha d)^m}
\]
from \dref{def:parameters} and \eref{eq:inter1}.  Using definition of $\delta$ and the above 
\begin{equation}\label{eq:eqn5}
(\epsilon^p-b\gamma)^m+\delta(\alpha b+\beta)=\frac{1}{(\beta^p+\alpha d)^m}-\frac{(\gamma d+\epsilon)(\alpha b+\beta)}{(\beta^p+\alpha d)^m}
\end{equation}
Note that
\[
(\gamma d+\epsilon)=\frac{-\alpha d+\beta^p}{\beta^{p+1}-\alpha^{p+1}}=\frac{1}{\beta+b\alpha}
\]
from \eref{eq:inter1}. So right hand side of \eref{eq:eqn5} turns out to be $0$ and from \eref{eq:eqn4}
\[
f \circ h=x^p(\alpha \epsilon^p+\beta\gamma)+x(\alpha\gamma^p+\beta\epsilon)
\]
Therefore, from \lref{lem:lemma3}$(3)$, 
\[
f \circ h=x
\]
and $f(x)$ is a PP of $\mathbb{F}_{p^2}$.
\end{proof}



\begin{thebibliography}{99}


\bibitem{Akbary08onsome}
Amir Akbary, Sean Alaric, and Qiang Wang.
\newblock On some classes of permutation polynomials.
\newblock {\em Int. J. Number Theory}, pages 121--133, 2008.

\bibitem{AKBARY201151}
Amir Akbary, Dragos Ghioca, and Qiang Wang.
\newblock On constructing permutations of finite fields.
\newblock {\em Finite Fields and Their Applications}, 17(1):51 -- 67, 2011.

\bibitem{evans1992}
Ronald~J. Evans, John Greene, and Harald Niederreiter.
\newblock Linearized polynomials and permutation polynomials of finite fields.
\newblock {\em Michigan Math. J.}, 39(3):405--413, 1992.

\bibitem{GUPTA200391}
Indivar Gupta, Laxmi Narain, and C.E.~Veni Madhavan.
\newblock Cryptological applications of permutation polynomials.
\newblock {\em Electronic Notes in Discrete Mathematics}, 15:91, 2003.
\newblock Proceedings of the R.C. Bose Centenary Symposium on Discrete
  Mathematics and Applications.

\bibitem{Gupta2016NCP}
Rohit Gupta and R.K. Sharma.
\newblock Some new classes of permutation trinomials over finite fields with
  even characteristic.
\newblock {\em Finite Fields Appl.}, 41(C):89--96, September 2016.

\bibitem{Hou:2015:survey}
Xiang Hou.
\newblock Permutation polynomials over finite fields - a survey of recent
  advances.
\newblock {\em Finite Fields Appl.}, 32(C):82--119, March 2015.

\bibitem{Lidl:1988:whendoes}
Rudolf Lidl and Gary~L. Mullen.
\newblock When does a polynomial over a finite field permute the elements of
  the fields?
\newblock {\em Am. Math. Monthly}, 95(3):243--246, March 1988.

\bibitem{Lidl:1986:IFF}
Rudolf Lidl and Harald Niederreiter.
\newblock {\em Introduction to Finite Fields and Their Applications}.
\newblock Cambridge University Press, New York, NY, USA, 1986.

\bibitem{Mullen:2013:HFF}
Gary~L. Mullen and Daniel Panario.
\newblock {\em Handbook of Finite Fields}.
\newblock Chapman \& Hall/CRC, 1st edition, 2013.

\bibitem{Sakzad2010}
A.~Sakzad, D.~Panario, M.~Sadeghi, and N.~Eshghi.
\newblock Self-inverse interleavers based on permutation functions for turbo
  codes.
\newblock In {\em 2010 48th Annual Allerton Conference on Communication,
  Control, and Computing (Allerton)}, pages 22--28, Sept 2010.

\bibitem{TUXANIDY2014244}
Aleksandr Tuxanidy and Qiang Wang.
\newblock On the inverses of some classes of permutations of finite fields.
\newblock {\em Finite Fields and Their Applications}, 28:244 -- 281, 2014.

\bibitem{vonzurGathen:1991:tests}
Joachim Von~zur Gathen.
\newblock Tests for permutation polynomials.
\newblock {\em SIAM J. Comput.}, 20(3):591--602, April 1991.

\bibitem{WANG2009207}
Qiang Wang.
\newblock On inverse permutation polynomials.
\newblock {\em Finite Fields and Their Applications}, 15(2):207 -- 213, 2009.

\bibitem{Wang2018}
Y.~Wang, W.~Zhang, and Z.~Zha.
\newblock Six new classes of permutation trinomials over $\mathbb{F}_{2^{n}}$.
\newblock {\em SIAM J. on Discrete Mathematics}, 32(3):1946--1961, 2018.

\bibitem{WU201379}
Baofeng Wu and Zhuojun Liu.
\newblock Linearized polynomials over finite fields revisited.
\newblock {\em Finite Fields and Their Applications}, 22:79 -- 100, 2013.

\bibitem{Yuan2008PermutationPO}
Jin Yuan, Cunsheng Ding, Huaxiong Wang, and Josef Pieprzyk.
\newblock Permutation polynomials of the form $(x^p-x+\delta)s+l(x)$.
\newblock {\em Finite Fields and Their Applications}, 14:482--493, 2008.

\bibitem{Zheng2013}
Y.~Zheng.
\newblock A note on a class of permutation polynomials.
\newblock In {\em 2013 Fourth International Conference on Emerging Intelligent
  Data and Web Technologies}, pages 302--306, Sept 2013.

\bibitem{Zheng:2016:LCP:2995737.2995773}
Yanbin Zheng, Pingzhi Yuan, and Dingyi Pei.
\newblock Large classes of permutation polynomials over $\mathbb {F}_{q^2}$.
\newblock {\em Des. Codes Cryptography}, 81(3):505--521, December 2016.

\bibitem{2008arXiv0810.2830Z}
Michael~E. {Zieve}.
\newblock {Classes of permutation polynomials based on cyclotomy and an
  additive analogue}.
\newblock {\em arXiv e-prints}, page arXiv:0810.2830, Oct 2008.

\end{thebibliography}
\end{document}